\crefname{hypothesis}{Hypothesis}{Hypotheses}
\title{Normalized tensor train decomposition\thanks{Submitted to the editors DATE.
\funding{This work was supported by the National Key R\&D Program of China (grant 2023YFA1009300). XW was supported by the Guangdong Provincial Quantum Science Strategic Initiative (grant No.~GDZX2403008) and the Guangdong Provincial Key Lab of Integrated Communication, Sensing and Computation for Ubiquitous Internet of Things (grant No.~2023B1212010007). BG and YY were supported by the National Natural Science Foundation of China (grant No.~12288201).}}}
\author{Renfeng Peng\thanks{State Key Laboratory of Mathematical Sciences, Academy of Mathematics and Systems Science, Chinese  Academy of Sciences, and University of Chinese Academy of Sciences, China ({pengrenfeng@lsec.cc.ac.cn}).}
    \and Chengkai Zhu\footnotemark[4]
    \and Bin Gao\thanks{State Key Laboratory of Mathematical Sciences, Academy of Mathematics and Systems Science, Chinese  Academy of Sciences, China
		({\{gaobin,yyx\}@lsec.cc.ac.cn}).}
    \and Xin Wang\thanks{Thrust of Artificial Intelligence, Information Hub, The Hong Kong University of Science and Technology (Guangzhou), China ({czhu696@connect.hkust-gz.edu.cn;felixxinwang@hkust-gz.edu.cn})}
	\and Ya-xiang Yuan\footnotemark[3]
}
\newcommand{\vecx}{\mathbf{x}}
\newcommand{\vecr}{\mathbf{r}}
\newcommand{\vecu}{\mathbf{u}}
\newcommand{\vecv}{\mathbf{v}}
\newcommand{\mata}{\mathbf{A}}
\newcommand{\matb}{\mathbf{B}}
\newcommand{\matc}{\mathbf{C}}
\newcommand{\matx}{\mathbf{X}}
\newcommand{\matu}{\mathbf{U}}
\newcommand{\matv}{\mathbf{V}}
\newcommand{\matW}{\mathbf{W}}
\newcommand{\matG}{\mathbf{G}}
\newcommand{\matI}{\mathbf{I}}
\newcommand{\matH}{\mathbf{H}}
\newcommand{\matK}{\mathbf{K}}
\newcommand{\matP}{\mathbf{P}}
\newcommand{\matt}{\mathbf{T}}
\newcommand{\matT}{\mathbf{T}}
\newcommand{\matS}{\mathbf{S}}
\newcommand{\matU}{\mathbf{U}}
\newcommand{\matY}{\mathbf{Y}}
\newcommand{\tensA}{\cA}
\newcommand{\tensB}{\mathcal{B}}
\newcommand{\tensE}{\mathcal{E}}
\newcommand{\tensG}{\mathcal{G}}
\newcommand{\tensM}{\mathcal{M}}
\newcommand{\tensN}{\mathcal{N}}
\newcommand{\tensT}{\mathcal{T}}
\newcommand{\tensU}{\mathcal{U}}
\newcommand{\tensV}{\mathcal{V}}
\newcommand{\tensW}{\mathcal{W}}
\newcommand{\tensY}{\mathcal{Y}}
\newcommand{\tensX}{\mathcal{X}}
\newcommand{\ranktt}{\mathrm{rank}_{\mathrm{TT}}}
\newcommand{\rmvec}{\mathrm{vec}}
\newcommand{\St}{\mathrm{St}}
\newcommand{\subjectto}{\mathrm{s.\,t.}}
\newcommand{\frob}{\mathrm{F}}
\newcommand{\leftunfolding}{\mathrm{L}}
\newcommand{\rightunfolding}{\mathrm{R}}
\DeclareMathOperator{\conj}{conj}
\DeclareMathOperator{\tangent}{T}
\DeclareMathOperator{\retr}{R}
\DeclareMathOperator{\proj}{P}
\DeclareMathOperator*{\argmin}{arg\,min}
\newcommand{\STAB}{\mathrm{STAB}_n}
\newcommand{\Cl}{\mathrm{Cl}_n}
\newcommand{\nc}{\newcommand}
\nc{\rnc}{\renewcommand}
\nc{\lbar}[1]{\overline{#1}}
\nc{\bra}[1]{\langle#1|}
\nc{\ket}[1]{|#1\rangle}
\nc{\dketbra}[2]{\vert #1 \rangle \hspace{-.8mm} \rangle \hspace{-.4mm} \langle\hspace{-.8mm}\langle #2 \vert}
\nc{\dbra}[1]{\langle\hspace{-.8mm}\langle #1\vert}
\nc{\dket}[1]{\vert#1\rangle\hspace{-.8mm}\rangle}
\nc{\ketbra}[2]{|#1\rangle\!\langle#2|}
\nc{\braket}[2]{\langle#1|#2\rangle}
\nc{\braandket}[3]{\langle #1|#2|#3\rangle}
\nc{\avg}[1]{\langle#1\rangle}
\nc{\rank}{\operatorname{rank}}
\nc{\smfrac}[2]{\mbox{$\frac{#1}{#2}$}}
\nc{\tr}{\operatorname{Tr}}
\nc{\ox}{\otimes}
\nc{\dg}{\dagger}
\nc{\dn}{\downarrow}
\nc{\cA}{{\cal A}}
\nc{\cB}{{\cal B}}
\nc{\cC}{{\cal C}}
\nc{\cD}{{\cal D}}
\nc{\cE}{{\cal E}}
\nc{\cF}{{\cal F}}
\nc{\cG}{{\cal G}}
\nc{\cH}{{\cal H}}
\nc{\cI}{{\cal I}}
\nc{\cJ}{{\cal J}}
\nc{\cK}{{\cal K}}
\nc{\cL}{{\cal L}}
\nc{\cM}{{\cal M}}
\nc{\cN}{{\cal N}}
\nc{\cO}{{\cal O}}
\nc{\cP}{{\cal P}}
\nc{\cQ}{{\cal Q}}
\nc{\cR}{{\cal R}}
\nc{\cS}{{\cal S}}
\nc{\cT}{{\cal T}}
\nc{\cU}{{\cal U}}
\nc{\cV}{{\cal V}}
\nc{\cX}{{\cal X}}
\nc{\cY}{{\cal Y}}
\nc{\cZ}{{\cal Z}}
\nc{\cW}{{\cal W}}
\nc{\csupp}{{\operatorname{csupp}}}
\nc{\qsupp}{{\operatorname{qsupp}}}
\nc{\var}{{\operatorname{var}}}
\nc{\rar}{\rightarrow}
\nc{\lrar}{\longrightarrow}
\nc{\polylog}{{\operatorname{polylog}}}
\nc{\idop}{{\mathds{1}}}
\nc{\wt}{{\operatorname{wt}}}
\nc{\av}[1]{{\left\langle {#1} \right\rangle}}
\nc{\supp}{{\operatorname{supp}}}
\nc{\SEP}{{\mathrm{SEP}}}
\nc{\PPT}{{\mathrm{PPT}}}
\nc{\RR}{{{\mathbb R}}}
\nc{\CC}{{{\mathbb C}}}
\nc{\FF}{{{\mathbb F}}}
\nc{\NN}{{{\mathbb N}}}
\nc{\ZZ}{{{\mathbb Z}}}
\nc{\PP}{{{\mathbb P}}}
\nc{\QQ}{{{\mathbb Q}}}
\nc{\UU}{{{\mathbb U}}}
\nc{\EE}{{{\mathbb E}}}
\nc{\id}{{\operatorname{id}}}
\nc{\LOCC}{{\text{\rm LOCC}}}
\begin{document}

\maketitle

\begin{abstract}
Tensors with unit Frobenius norm are fundamental objects in many fields, including scientific computing and quantum physics, which are able to represent normalized eigenvectors and pure quantum states. While the tensor train decomposition provides a powerful low-rank format for tackling high-dimensional problems, it does not intrinsically enforce the unit-norm constraint. To address this, we introduce the normalized tensor train (NTT) decomposition, which aims to approximate a tensor by unit-norm tensors in tensor train format. The low-rank structure of NTT decomposition not only saves storage and computational cost but also preserves the underlying unit-norm structure. We prove that the set of fixed-rank NTT tensors forms a smooth manifold, and the corresponding Riemannian geometry is derived, paving the way for geometric methods. We propose NTT-based methods for low-rank tensor recovery, high-dimensional eigenvalue problem, estimation of stabilizer rank, and calculation of the minimum output R\'enyi 2-entropy of quantum channels. Numerical experiments demonstrate the superior efficiency and scalability of the proposed NTT-based methods. 
\end{abstract}

\begin{keywords}
Tensor decomposition, tensor train, eigenvalue problem, quantum information theory, matrix product states.
\end{keywords}

\begin{MSCcodes}
15A69, 65K05, 90C30, 81-08.
\end{MSCcodes}

\section{Introduction}
Tensors are higher-dimensional generalizations of matrices, i.e., a tensor is an array with $d$ indices, which provides powerful tools for representing high-dimensional datasets. However, storing a tensor in full size becomes prohibitively expensive, as the required memory grows exponentially with~$d$. By imposing a low-rank structure, one can capture the most essential information of a tensor and significantly reduce storage requirements. Low-rank tensors have demonstrated effectiveness in various applications, including image processing~\cite{vasilescu2003multilinear}, tensor completion~\cite{kressner2014low,gao2024riemannian}, high-dimensional eigenvalue problems~\cite{dolgov2014computation}, and high-dimensional partial differential equations~\cite{bachmayr2023low}; see~\cite{grasedyck2013literature} for an overview. The low-rank structure of a tensor depends on a specific tensor decomposition format. The canonical polyadic (CP) decomposition~\cite{hitchcock1928multiple}, Tucker decomposition~\cite{de2000multilinear}, hierarchical Tucker decomposition~\cite{uschmajew2013geometry}, and tensor train (TT) decomposition~\cite{oseledets2011tensor} (also known as matrix product states (MPS) in quantum physics~\cite{vidal2003efficient,Garcia2007}) are among the most typical formats. We refer to~\cite{kolda2009tensor} for an overview. 

A critical observation is that in many applications, e.g., in quantum physics, the tensors of interest are not only low-rank but also inherently normalized. Thus, we introduce the following \emph{normalized} tensor train (NTT) decomposition. Given a tensor $\tensA\in\CC^{n_1\times n_2\times\cdots\times n_d}$, the NTT decomposition of~$\tensA$ aims to approximate $\tensA$ by a low-rank tensor $\llbracket\tensU_1,\tensU_2,\dots,\tensU_d\rrbracket$ with unit Frobenius norm,
\[\tensA(i_1,i_2,\dots,i_d)\approx\matu_1(i_1)\matu_2(i_2)\cdots\matu_d(i_d)\quad\text{with}\quad\left\|\llbracket\tensU_1,\tensU_2,\dots,\tensU_d\rrbracket\right\|_\frob=1\]
for $i_k=1,2,\dots,n_k$ and $k=1,2,\dots,d$, where $\tensU_k\in\CC^{r_{k-1}\times n_k\times r_k}$ is a core tensor, $\matu_k(i_k)={\tensU_k(:,i_k,:)}$, and $r_0,r_1,\dots,r_d$ are positive integers with $r_0=r_d=1$. We refer to $\llbracket\tensU_1,\tensU_2,\dots,\tensU_d\rrbracket$ as a \emph{NTT tensor}. \cref{fig: TT}~depicts the NTT decomposition of a tensor. Note that NTT decomposition can also be defined for tensors in~$\mathbb{R}^{n_1\times n_2\times\cdots\times n_d}$.

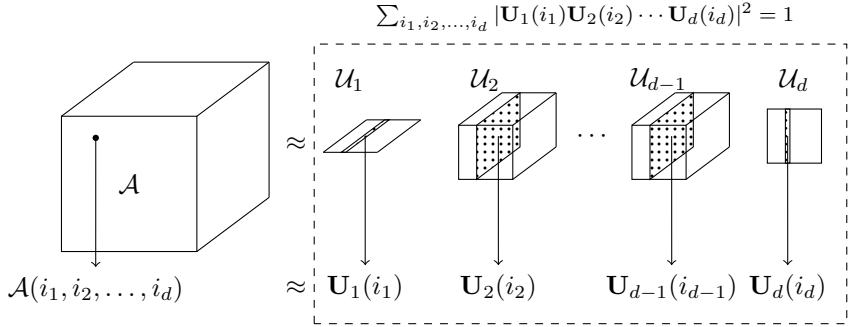
\begin{figure}[htbp]
	\centering
	\begin{tikzpicture}[scale = 1.2]
		\draw[-] (-1.4,1.6) -- (-0.6,2.2);
		\draw[-] (-0.6,2.2) -- (-2.1,2.2);
		\draw[-] (-2.1,2.2) -- (-2.9,1.6);
		\draw[-] (-2.9,1.6) rectangle (-1.4,0.1);
		\draw[-] (-1.4,0.1) -- (-0.6,0.7);
		\draw[-] (-0.6,0.7) -- (-0.6,2.2);
		\node (v) at (-2.15,0.85) {$\tensA$};

            \fill (-2.52,0.36+1) circle (1pt);
		\node (x123) at (-2.52,-0.32) {$\tensA(i_1,i_2,\dots,i_d)$};
		\draw[->] (-2.52,0.36+1) -- (x123);
	
		\node (u) at (-0.3,1.3) {$\approx$};
		\node (u) at (-0.3,-0.3) {$\approx$};
	
		\draw[-] (1.5-0.9,1.5-0.3) -- (1.98-0.9,1.86-0.3);
		\draw[-] (1.98-0.9,1.86-0.3) -- (1.38-0.9,1.86-0.3);
		\draw[-] (0.9-0.9,1.5-0.3) -- (1.38-0.9,1.86-0.3);
		\draw[-] (0.9-0.9,1.5-0.3) -- (1.5-0.9,1.5-0.3);
		\node (G) at (1.2-0.9,2) {$\tensU_1$};
		\path[draw, pattern=dots] (0.2,1.2) -- (0.25,1.2) -- (0.73,1.56) -- (0.68,1.56) -- cycle;
		\node (G1) at (0.465,-0.3) {$\matU_1(i_1)$};
		\draw[->] (0.465,1.38) -- (G1);
		
		\draw[-] (1.5+0.6,1.5) -- (1.98+0.6,1.86);
		\draw[-] (1.98+0.6,1.86) -- (1.38+0.6,1.86);
		\draw[-] (0.9+0.6,1.5) -- (1.38+0.6,1.86);
		\draw[-] (1.98+0.6,1.86) -- (1.98+0.6,1.26);
		\draw[-] (1.5+0.6,0.9) -- (1.98+0.6,1.26);
		\draw[-] (0.9+0.6,1.5) rectangle (1.5+0.6,0.9);
		\node (G) at (1.2+0.6,2) {$\tensU_{2}$};
		\path[draw, pattern=dots] (0.2+1.5,0.9) -- (0.68+1.5,1.26) -- (0.68+1.5,1.86) -- (0.2+1.5,1.5) -- cycle;
		\node (G2) at (0.44+1.5,-0.3) {$\matU_{2}(i_2)$};
		\draw[->] (0.44+1.5,1.38) -- (G2);
		
		\node(cdots) at (3,1.38) {$\cdots$};

		\draw[-] (1.5+0.6+1.92,1.5) -- (1.98+0.6+1.92,1.86);
		\draw[-] (1.98+0.6+1.92,1.86) -- (1.38+0.6+1.92,1.86);
		\draw[-] (0.9+0.6+1.92,1.5) -- (1.38+0.6+1.92,1.86);
		\draw[-] (1.98+0.6+1.92,1.86) -- (1.98+0.6+1.92,1.26);
		\draw[-] (1.5+0.6+1.92,0.9) -- (1.98+0.6+1.92,1.26);
		\draw[-] (0.9+0.6+1.92,1.5) rectangle (1.5+0.6+1.92,0.9);
		\node (G) at (1.2+0.6+1.92,2) {$\tensU_{d-1}$};
		\path[draw, pattern=dots] (0.2+1.5+1.92,0.9) -- (0.68+1.5+1.92,1.26) -- (0.68+1.5+1.92,1.86) -- (0.2+1.5+1.92,1.5) -- cycle;
		\node (Gd1) at (0.44+1.5+1.92,-0.3) {$\matU_{d-1}(i_{d-1})$};
		\draw[->] (0.44+1.5+1.92,1.38) -- (Gd1);

		\draw[-] (0.9+0.6+1.92+1.5,1.5+0.18) rectangle (1.5+0.6+1.92+1.5,0.9+0.18);
		\node (G) at (1.2+0.6+1.92+1.5,2) {$\tensU_{d}$};
		\path[draw, pattern=dots] (0.2+1.5+1.92+1.5,0.9+0.18) -- (0.25+1.5+1.92+1.5,0.9+0.18) -- (0.25+1.5+1.92+1.5,1.5+0.18) -- (0.2+1.5+1.92+1.5,1.5+0.18) -- cycle;
		\node (Gd) at (0.225+1.5+1.92+1.5,-0.3) {$\matU_{d}(i_{d})$};
		\draw[->] (0.225+1.5+1.92+1.5,1.38) -- (Gd);

            \draw[dashed] (-0.1,-0.7) rectangle (5.8,2.4);
            \node at (2.9,2.7) {\footnotesize$\sum_{i_1,i_2,\dots,i_d}|\matu_1(i_1)\matu_2(i_2)\cdots\matu_d(i_d)|^2=1$}; 
	\end{tikzpicture}
        \caption{Normalized tensor train decomposition of a tensor.}
        \label{fig: TT}
    \end{figure}

\subsection{Applications}
We present several applications of the NTT decomposition. The first class of applications arises in scientific computing.

\paragraph{Low-rank tensor recovery} 
Given a partially observed unit-norm tensor $\tensA \in \mathbb{R}^{n_1\times n_2 \times \cdots \times n_d}$ in the NTT format on an index set $\Omega \subset [n_1]\times [n_2]\times \cdots \times [n_d]$, we aim to recover the full tensor by exploiting the low-rank structure of $\tensA$. This task arises in a variety of applications such as statistics, machine learning, and compression of high-dimensional functions; see, e.g.,~\cite{chu2005low,khoo2021efficient}. Specifically, the tensor recovery can be implemented by solving the following optimization problem on NTT tensors,
\begin{equation*}
    \begin{aligned}
        \min\quad & f(\tensX)=\frac12\|\proj_{\Omega}(\tensX)-\proj_\Omega(\tensA)\|_\frob^2\\
        \subjectto\quad & \tensX~\text{is a NTT tensor};
    \end{aligned}
\end{equation*}
see~\cref{subsec:lrtc} for details.

\paragraph{Eigenvalue problem with tensor product structure}
The computation of the smallest (largest) eigenvalue $\lambda_{\min}$ ($\lambda_{\max}$) and corresponding eigenvector $\vecx\in\mathbb{R}^{n_1n_2\cdots n_d}$ for a symmetric matrix $\mata\in\mathbb{R}^{(n_1n_2\cdots n_d)\times(n_1n_2\cdots n_d)}$ is one of the key problems in numerical linear algebra and computational physics~\cite{jones2015density}, where the space $\mathbb{R}^{n_1n_2\cdots n_d}$ arises, for example, from the discretization of a high-dimensional partial differential equation on tensor product space $\mathbb{R}^{n_1}\otimes\mathbb{R}^{n_2}\otimes\cdots\otimes\mathbb{R}^{n_d}$. Directly solving the problem suffers from the curse of dimensionality. Similarly, such an issue also appears in quantum many-body physics, where the \emph{Hamiltonian} of a quantum system is modeled by a Hermitian matrix in $\mathbb{C}^{(n_1n_2\cdots n_d)\times(n_1n_2\cdots n_d)}$. Finding the ground state of the Hamiltonian and its energy can also be interpreted by the eigenvalue problem. Moreover, the low-rank MPS can faithfully represent the ground state of local Hamiltonians~\cite{Verstraete2006}. In light of this observation, we aim to find a low-rank solution to the eigenvalue problem in the NTT format, i.e., 
\begin{equation*}
    \begin{aligned}
        \min_{\tensX}(\max_{\tensX})\quad & \rmvec(\tensX)^\top\mata\rmvec(\tensX)\\
        \subjectto\quad\quad &\tensX~\text{is a NTT tensor};
    \end{aligned}
\end{equation*}
see~\cref{subsec:eig_prob} for details. 

The second class of applications is motivated by quantum information theory, where the NTT decomposition allows for efficient numerical investigation of two fundamental concepts: the quantification of quantum resources and the additivity of channel capacities.

\paragraph{Approximation of the stabilizer rank} We consider the non-stabilizerness, or the magic~\cite{Bravyi_2005}, a crucial ingredient for achieving quantum advantage.
A central measure of the nonstabilizerness for a pure state is the \textit{stabilizer rank}~\cite{Bravyi2016}, i.e., the smallest integer $R$ for which the target state can be written as a convex combination of $R$ stabilizer states. However, computing the stabilizer rank of a given $n$-qubit state is intractable, as the number of stabilizer states grows super-exponentially. Consequently, any brute-force approach (e.g., searching through all possible tuples of stabilizer states to find a decomposition) is computationally prohibitive, even for small systems. Therefore, we introduce the notion of $(\epsilon,\delta)$-approximate stabilizer rank for a pure state $\ket\psi$, and propose evaluating it by solving the following optimization problem on the Cartesian product of the set of fixed-rank NTT tensors, 
\begin{equation*}
\begin{aligned}
    \min_{\{c_j\}_j,\{\ket{\phi_j}\}_j} &\; \frac12\Big\|\sum_{j=1}^{R} c_j \ket{\phi_j} - \ket{\psi}\Big\|_\frob^2 + \sum_{j=1}^R M_2(\ket{\phi_j}) \\
    \subjectto\quad&\;\; c_1,\dots,c_R \in \CC,~ \text{each}~ \ket{\phi_j}~\text{is a NTT tensor};
\end{aligned}
\end{equation*}
see~\cref{subsec:stab_rank} for details.

\paragraph{Minimum output R\'enyi $p$-entropy} We utilize the NTT decomposition to investigate the additivity of the minimum output R\'enyi $p$-entropy of quantum channels, completely positive and trace-preserving linear maps. The non-additivity of this quantity when $p$ goes to 1, proven by Hastings in high dimensions~\cite{Hastings_2009}, resolved a major open problem in quantum information theory by implying the non-additivity of classical capacity of a quantum channel. However, finding explicit counterexamples remains a significant challenge. The primary bottleneck for examining the superadditivity is to compute the min-output entropy for $n$ tensor products of a channel, where the optimization is performed over the space of input quantum states, a sphere whose dimension grows exponentially with $n$. By representing the high-dimensional input state in the NTT format, we transform the problem of computing the minimum output entropy into a tractable optimization on the set of fixed-rank NTT tensors,
\begin{equation*}
    \begin{aligned}
        \min_{\ket{\psi}} &\quad \frac{1}{1-p}\log\tr(\cN^{\ox n}_{A\to B}(\ketbra{\psi}{\psi}_{A^n})^p)\\ 
        \ket{\psi}~\text{is a NTT tensor};
    \end{aligned}
\end{equation*}
see~\cref{subsec:Renyi} for details.

\subsection{Related work and motivation}
We provide an overview of the existing matrix and tensor decompositions and the related geometries. For low-rank matrices, the set of fixed-rank matrices $\{\matx\in\mathbb{R}^{m\times n}:\rank(\matx)=r\}$ is a smooth embedded manifold; see, e.g., \cite{helmke1995critical}. Beyond the fixed-rank scenario, Cason et al.~\cite{cason2013iterative} studied the matrix variety $\{\matx\in\mathbb{R}^{m\times n}:\rank(\matx)\leq r\}$ and developed an explicit parametrization of tangent cones. Since the matrix variety is non-smooth, a desingularization approach was developed~\cite{khrulkov2018desingularization,rebjock2024optimization}, where slack variables are introduced to construct a smooth manifold embedded in a higher-dimensional space. The developed geometries pave the way for geometric methods for the minimization of a smooth function on low-rank matrices~\cite{vandereycken2013low,schneider2015convergence}.

In contrast with the matrix rank, the rank of a tensor depends on the choice of tensor decomposition format. Uschmajew and Vandereycken~\cite{uschmajew2013geometry} developed the differential geometry of tensors in hierarchical Tucker format. Holtz et al.~\cite{holtz2012manifolds} proved that the set of fixed rank tensors in the TT format forms a smooth manifold and provided an explicit representation of tangent spaces. Variants such as the block TT decomposition are beneficial for large-scale eigenvalue computations; see~\cite{dolgov2014computation}. Recently, a desingularization approach was proposed in~\cite{gao2024desingularization} for bounded-rank tensors in the TT format. We refer to~\cite{steinlechner2016riemannian,uschmajew2020geometric} for geometric methods for minimization of a smooth function on low-rank tensors.

Several works investigated the geometry of low-rank matrices under additional constraints. For instance, Cason et al.~\cite{cason2013iterative} studied the geometry of the set of low-rank matrices with unit Frobenius norm. Rakhuba and Oseledets~\cite{rakhuba2018jacobi} considered computing the smallest eigenvalue on the set of fixed-rank matrices with unit-norm constraints. For computing more than one eigenvalue, Krumnow et al.~\cite{krumnow2021computing} proposed a trace minimization approach on the intersection of the Stiefel manifold and low-rank matrices. More recently, Yang et al.~\cite{yang2025space} analyzed the geometry of low-rank matrix varieties under orthogonally-invariant constraints. 

Due to the intricacy of tensors, the properties and geometry of normalized tensor train decomposition can not be generalized straightforwardly from existing results. 
First, the additional unit-norm constraint fundamentally changes the low-rank approximation problem. For TT decomposition, a \emph{quasi-optimal} low-rank approximation can be computed via sequential SVDs. However, due to the additional unit-norm constraint, it is no longer clear how to design a quasi-optimal approximation in the NTT format.
Second, it is well-known that fixed-rank tensors in the TT format form a smooth manifold. However, whether this property also holds for fixed-rank tensors in the NTT format remains unknown.
Third, projection onto the set of fixed-rank tensors in the NTT format requires additional procedures to enforce both the unit-norm and low-rank constraints, which inevitably increases computational cost, highlighting the need for efficient implementations of basic operations in the NTT format.

\subsection{Contributions}
In this paper, we propose the normalized tensor train decomposition, and delve into the properties and geometry of tensors in the NTT format. First, we prove that the NTT decomposition exists for tensors with unit Frobenius norm, which is equivalent to the TT decomposition. For a tensor $\tensA$ not having unit norm, we construct a rank-$\vecr$ approximation operator $\proj^{\mathrm{NTTSVD}}_\vecr$ via TT-SVD and projection onto the unit sphere, which enjoys quasi-optimality
\[\|\proj^{\mathrm{NTTSVD}}_\vecr(\tensA)-\tensA\|_\frob\leq (2\sqrt{d-1}+1)\|\proj_{\tensN_\vecr}(\tensA)-\tensA\|_\frob,\]
where $\proj_{\tensN_\vecr}(\tensA)$ is the best rank-$\vecr$ approximation in the NTT format. 

Subsequently, we consider the set 
\[\tensN_\vecr=\{\tensX\in\CC^{n_1\times n_2\times\cdots\times n_d}:\ranktt(\tensX)=\vecr\ \text{and}\ \|\tensX\|_\frob=1\}\] 
of rank-$\vecr$ tensors in the NTT format, which is the intersection of the manifold of fixed-rank tensors in the TT format and the unit sphere. Since two smooth manifolds intersect transversally, we deduce that $\tensN_\vecr$ is a smooth manifold. We develop the Riemannian geometry of $\tensN_\vecr$, facilitating the geometric methods on $\tensN_\vecr$. The low-rank structure of $\tensN_\vecr$ not only saves storage and computational cost but also preserves the underlying unit-norm structure. 
The differences between tensor train decomposition and normalized TT decomposition are summarized in~\cref{tab:TTvsNTT}.
\begin{table}[htbp]
    \centering\footnotesize
    \caption{The differences between tensor train decomposition and normalized TT decomposition; see~\cref{sec:preliminaries,sec:NTT} for details. $\vecr=(r_0,r_1,\dots,r_d)$. $\tensB_1=\{\tensX\in\mathbb{C}^{n_1\times n_2\times\cdots\times n_d}:\|\tensX\|_\frob=1\}$.}
    \renewcommand{\arraystretch}{1.2} 
    \setlength{\tabcolsep}{5pt}       
    \begin{tabular}{lcc}
        \toprule
        Property & {Tensor train} & Normalized tensor train \\
        \midrule
        Rank-$\vecr$ approximation & $\proj^{\mathrm{TT-SVD}}_\vecr$ & $\proj_{\tensB_1}^{}\circ\proj^{\mathrm{TT-SVD}}_\vecr$ \\
        \cmidrule(rl){2-2}\cmidrule(rl){3-3}
        Quasi-optimality & $\sqrt{d-1}$ in~\cref{eq:quasi_tt} & $(2\sqrt{d-1}+1)$ in~\cref{eq:quasi}\\
        \midrule
        Parameter space & \multicolumn{2}{c}{$\mathbb{C}^{r_0\times n_1\times r_1}\times\mathbb{C}^{r_1\times n_2\times r_2}\times\cdots\times\mathbb{C}^{r_{d-1}\times n_d\times r_d}$,\quad $r_0=r_d=1$}\\
        \midrule
        Fixed-rank manifold & $\tensM_\vecr$ & $\tensN_\vecr = \tensM_\vecr\cap\tensB_1$\\
        \cmidrule(rl){2-2}\cmidrule(rl){3-3}
        Dimension & $\sum_{k=1}^d r_{k-1}n_kr_k-\sum_{k=1}^{d-1}r_k^2$ & $\sum_{k=1}^d r_{k-1}n_kr_k-\sum_{k=1}^{d-1}r_k^2-1$\\
        \midrule
        \multirow{2}*{Tangent space} & \multicolumn{2}{c}{$\sum_{k=1}^d\llbracket\tensU_1,\dots,\tensU_{k-1},\dot\tensU_k,\tensU_{k+1},\dots,\tensU_d\rrbracket$} \\
        \cmidrule(rl){2-2}\cmidrule(rl){3-3}
        & $\leftunfolding(\dot\tensU_k)^\dagger\leftunfolding(\tensU_k)=0$, $k=1,2,\dots,d-1$ & $\leftunfolding(\dot\tensU_k)^\dagger\leftunfolding(\tensU_k)=0$, $k=1,2,\dots,d$\\
        \bottomrule
    \end{tabular}
    \label{tab:TTvsNTT}
\end{table}

Building upon the developed NTT decomposition, we propose geometric methods for the minimization of smooth functions on $\tensN_\vecr$, where a Riemannian conjugate gradient method, denoted by NTT-RCG, is developed. We demonstrate the effectiveness of NTT-RCG on low-rank tensor recovery, eigenvalue problem, computation of the stabilizer rank in quantum physics, and evaluation of the minimum output R\'enyi $p$-entropy. 

More specifically, we apply the NTT-RCG method for applications in scientific computing. For low-rank tensor recovery, the NTT-RCG method successfully recovers low-rank tensors for both the noiseless and noisy observations. For the eigenvalue problem, we compare the proposed NTT-RCG method with an alternating linear scheme method~\cite{holtz2012alternating}, which is also known as the single-site DMRG in physics. Numerical results suggest that the proposed method performs better than the single-site DMRG with faster convergence and better accuracy on the largest (smallest) eigenvalue. Second, we adopt the NTT-RCG method to the applications in quantum information theory. For the computation of the approximate stabilizer rank, the NTT decomposition not only saves storage but also enables efficient computation of the cost function. Moreover, the NTT-RCG method is indeed able to approximate a non-stabilizer state by several states with much lower non-stabilizerness. Additionally, the NTT-RCG method provides a new practical method for directly evaluating the minimum output R\'enyi $p$-entropy. We consider two typical channels in quantum information: the antisymmetric channel and the generalized amplitude damping channel. The computational cost of the proposed NTT-RCG method scales polynomially with respect to qubits and bond dimensions. We numerically validated that there is no superadditivity up to $12$ qubits with a rank less than $(1,10,10,\dots,10,1)$.

\subsection{Organization}
We introduce the preliminaries of Riemannian geometry and tensor operations in~\cref{sec:preliminaries}. We propose the normalized tensor train decomposition, and develop the Riemannian geometry of the set of fixed-rank tensors in~\cref{sec:NTT}. Section~\ref{sec:applications_sisc} presents applications of the NTT decomposition in scientific computing. Section~\ref{sec:applications_QIT} is devoted to applications in quantum information theory, with problem formulations adopted from the relevant literature. We draw the conclusion in~\cref{sec:conclusion}.

\section{Preliminaries}\label{sec:preliminaries}
In this section, we introduce the preliminaries of Riemannian geometry; see, e.g., \cite{absil2009optimization,boumal2023intromanifolds}. Then, we present notation for tensor operations~\cite{oseledets2011tensor}.

\subsection{Notation for Riemannian geometry}
Assume that a smooth manifold $\tensM$ is embedded in a Euclidean space $\tensE$. The tangent space of $\tensM$ at $x\in\tensM$ is denoted by $\tangent_x\!\tensM$. Let $\tensM$ be endowed with a \emph{Riemannian metric} $g$, where $g_x:\tangent_x\!\tensM\times\tangent_x\!\tensM\to\mathbb{R}$ is a symmetric, bilinear, positive-definite function, and smooth with respect to $x\in\tensM$. The Riemannian metric $g$ introduces a norm $\|\eta\|_x=\sqrt{g_x(\eta,\eta)}$ for $\eta\in\tangent_x\!\tensM$. Given $\bar{\eta}\in\tangent_x\!\tensE\simeq\tensE$, the orthogonal projection operator onto $\tangent_x\!\tensM$ is $\proj_{\tangent_x\!\tensM}(\bar{\eta})$. The \emph{tangent bundle} is denoted by $\tangent\!\tensM=\cup_{x\in\tensM}\tangent_x\!\tensM$. A smooth mapping $\retr:\tangent\!\tensM\to\tensM$ is called a retraction~\cite[Definition 1]{absil2012projection} on $\tensM$ around $x\in\tensM$ if there exists of a neighborhood $U$ of $(x,0)\in\tangent\tensM$ such that 1) $U\subseteq\mathrm{dom}(\retr)$ and $\retr$ is smooth on $U$; 2) $\retr_x(0) = x$ for all $x\in\tensM$; 3) $\mathrm{D}\!\retr_x(\cdot)[0]=\mathrm{id}_{\tangent_x\!\tensM}$. The \emph{vector transport} operator is denoted by $\tensT_{y\gets x}:\tangent_x\!\tensM\to\tangent_y\!\tensM$. The set $\St_{\CC}(p,n)=\{\matx\in\CC^{n\times p}:\matx^\dagger\matx=\matI_p\}$ is referred to as the \emph{complex Stiefel manifold}, where $\matx^\dagger$ is the conjugate transpose of~$\matx$. The complex conjugate of $\matx$ is denoted by $\conj(\matx)=(\conj(x_{i,j}))_{i,j}$. The matrix $\matx^\top$ represents the transpose of $\matx$. 

\subsection{Notation for tensor operations}
The inner product between $\tensX,\tensY\in\CC^{n_1\times\cdots\times n_d}$ is defined by $\langle\tensX,\tensY\rangle := \sum_{i_1=1}^{n_1} \cdots \sum_{i_d=1}^{n_d} \conj(\tensX({i_1,\dots,i_d}))\tensY({i_1,\dots,i_d}).$ The Frobenius norm of a tensor $\tensX$ is defined by $\|\tensX\|_\mathrm{F}:=\sqrt{\langle\tensX,\tensX\rangle}$. The $k$-mode product of a tensor $\tensX$ and a matrix $\mata\in\CC^{n_k\times M}$ is denoted by $\tensX\times_k\mata\in\CC^{n_1\times\cdots\times M\times\cdots\times n_d}$, where the $ (i_1,\dots,i_{k-1},j,i_{k+1},\dots,i_d)$-th entry of $\tensX\times_k\mata$ is $\sum_{i_k=1}^{n_k}x_{i_1\dots i_d}a_{ji_k}$. Given $\vecu_1\in\CC^{n_1}\setminus\{0\},\dots,\vecu_d\in\CC^{n_d}\setminus\{0\}$, a rank-$1$ tensor of size $n_1\times n_2\times\cdots\times n_d$ is defined by the outer product $\tensV=\vecu_1\circ\vecu_2\circ\cdots\circ\vecu_d$, or $v_{i_1,\dots,i_d}=u_{1,i_1}\cdots u_{d,i_d}$ equivalently. The Kronecker product of two matrices $\mata\in\CC^{m_1\times n_1}$ and $\matb\in\CC^{m_2\times n_2}$ is an $(m_1m_2)$-by-$(n_1n_2)$ matrix defined by $\mata\ox\matb:=(a_{ij}\matb)_{ij}$. A tensor $\tensU_k\in\mathbb{C}^{r_{k-1}\times n_k\times r_k}$ can be reshaped to \emph{left} and \emph{right unfoldings} defined by $\leftunfolding(\tensU_k)\in\CC^{(r_{k-1}n_k)\times r_k}$ and $\rightunfolding(\tensU_k)\in\CC^{r_{k-1}\times (n_kr_k)}$. Given a tensor $\tensX\in\CC^{n_1\times n_2\times n_3}$ and $\mata\in\CC^{n_1\times n_1}$, $\matb\in\CC^{n_2\times n_2}$, $\matc\in\CC^{n_3\times n_3}$, we provide the following equalities for the left and right unfoldings,
\begin{equation}
    \label{eq: matricization}
    \begin{aligned}
        \leftunfolding(\tensX\times_1\mata\times_2\matb\times_3\matc)&=(\matb\ox\mata)\leftunfolding(\tensX)\matc^\top,\\
        \rightunfolding(\tensX\times_1\mata\times_2\matb\times_3\matc)&=\mata\rightunfolding(\tensX)(\matc\ox\matb)^\top.
    \end{aligned}
\end{equation}

We introduce the notation for tensors in the tensor train format as follows. Denote the index set $\{1,2,\dots,n\}$ by~$[n]$. The $k$-th unfolding matrix of a tensor $\tensX\in\mathbb{C}^{n_1\times n_2\times\cdots\times n_d}$ is defined by $\matx_{\langle k\rangle}\in\CC^{(n_1n_2\cdots n_k)\times(n_{k+1}n_{k+2}\cdots n_d)}$ for $k\in[d-1]$ with 
\[\matx_{\langle k\rangle}\Big(i_1+\sum_{j=2}^{k}(i_j-1)\prod_{\ell=1}^{j-1}n_\ell,\ i_{k+1}+\sum_{j=k+2}^{d}(i_j-1)\prod_{\ell=k+1}^{j-1}n_\ell\Big)=\tensX(i_1,i_2,\dots,i_d).\]
The TT rank of $\tensX$ is defined by the array
\begin{equation*}
\ranktt(\tensX)\coloneqq (1,\rank(\matx_{\langle 1\rangle}),\rank(\matx_{\langle 2\rangle}),\dots,\rank(\matx_{\langle d-1\rangle}),1).
\end{equation*} 
It follows from~\cite[Theorem~1]{holtz2012manifolds} that for $\tensX$ with $\ranktt(\tensX)=\vecr=(r_0,r_1,\dots,r_d) $ and $r_0=r_d=1$, one can yield the TT decomposition $\tensX=\llbracket\tensU_1,\tensU_2,\dots,\tensU_d\rrbracket$ (or $\tensX(i_1,i_2,\dots,i_d)=\matu_1(i_1)\matu_2(i_2)\cdots\matu_d(i_d)$ for $i_k\in[n_k]$ and $k\in[d]$ equivalently) with core tensors $\tensU_k\in\CC^{r_{k-1}\times n_k\times r_k}$ by using $d$ sequential SVDs. The process is referred to as the TT-SVD algorithm in~\cite[Algorithm~1]{oseledets2011tensor}. More specifically, starting with the first-unfolding matrix $\matx_{\langle1\rangle}$, the TT-SVD algorithm executes by 1) sequentially reshaping the tensor into a matrix; 2) decomposing the matrix by SVD; 3) reshaping the resulting decomposition to yield a core tensor $\tensU_k$ and a smaller tensor; see the TT-SVD algorithm in~\cref{fig:TT-SVD}. The TT-SVD operator $\proj^\mathrm{TTSVD}_\vecr$ satisfies the following quasi-optimality
\begin{equation}
    \label{eq:quasi_tt}
    \|\proj^\mathrm{TTSVD}_\vecr(\tensA)-\tensA\|_\frob\leq\sqrt{d-1}\|\proj_\vecr(\tensA)-\tensA\|_\frob
\end{equation}
for all $\tensA\in\CC^{n_1\times n_2\times\cdots\times n_d}$, where $\proj_\vecr(\tensA)$ is the best rank-$\vecr$ approximation of $\tensA$ in the TT format.

The \emph{interface} matrices $\matx_{\leq k}$ and $\matx_{\geq k+1}$ of~$\tensX$ are defined by $\matx_{\leq k}(i_1+\sum_{j=2}^{k}(i_j-1)\prod_{\ell=1}^{j-1}n_\ell, :)=\matu_1(i_1)\cdots\matu_k(i_k)$ and $\matx_{\geq k+1}(i_{k+1}+\sum_{j=k+2}^{d}(i_j-1)\prod_{\ell=k+1}^{j-1}n_\ell,:) = (\matu_{k+1}(i_{k+1})\cdots\matu_d(i_d))^\top$ respectively. It holds that $\matx_{\langle k\rangle}=\matx_{\leq k}^{}\matx_{\geq k+1}^\top$ and the interface matrices can be constructed recursively by
\begin{equation}\label{eq: recursive interface}
    \matx_{\leq k}=(\matI_{n_k}\ox\matx_{\leq k-1})\leftunfolding(\tensU_k)\quad\text{and}\quad\matx_{\geq k+1}=(\matx_{\geq k+2}\ox\matI_{n_{k+1}})\rightunfolding(\tensU_{k+1})^\top.
\end{equation}
A tensor $\tensX=\llbracket\tensU_1,\tensU_2,\dots,\tensU_d\rrbracket$ is called $k$-orthogonal if $\leftunfolding(\tensU_j)\in\St_{\CC}(r_j,r_{j-1}n_j)$ for $j\in[k-1]$ and $\rightunfolding(\tensU_j)^\top\in\St_{\CC}(r_{j-1},n_jr_j)$ for $j=k+1,k+2,\dots,d$. The tensor is called left- or right-orthogonal if $k=d$ or $k=1$, respectively. It follows from~\cite[Section 3.1]{steinlechner2016riemannian} that any tensor $\tensX$ can be left- or right-orthogonalized via QR decomposition. The set of fixed-rank tensors in the TT format is denoted by
\begin{equation}
    \tensM_\vecr=\left\{\tensX\in\CC^{n_1\times n_2\times\cdots\times n_d}:\ranktt(\tensX)=\vecr
    \right\},
    \label{eq:Mr}
\end{equation}
which is a complex submanifold of $\CC^{n_1\times n_2\times\cdots\times n_d}$; see~\cite[Theorem~14]{haegeman2014geometry}.

\section{Normalized tensor train decomposition}\label{sec:NTT}
In this section, we first define the NTT decomposition. Then, we provide an approximate projection to compress a full tensor into the NTT format via low-rank approximation. Additionally, we delve into the Riemannian geometry of the set of fixed-rank tensors in NTT decomposition.

\subsection{NTT decomposition}
The tensor train decomposition is able to decompose a large tensor into smaller core tensors. However, it does not intrinsically consider the unit-norm constraint, which appears in various applications. To this end, we introduce the normalized tensor train decomposition, which approximates a full tensor by a TT tensor with unit Frobenius norm. 

\begin{definition}[normalized tensor train decomposition]
    Given a tensor $\tensA\in\CC^{n_1\times n_2\times\cdots\times n_d}$, the normalized tensor train decomposition of $\tensA$ aims to approximate the tensor $\tensA$ by a TT tensor $\llbracket\tensU_1,\tensU_2,\dots,\tensU_d\rrbracket$ satisfying
    \[\tensA\approx\llbracket\tensU_1,\tensU_2,\dots,\tensU_d\rrbracket\quad\text{and}\quad\|\llbracket\tensU_1,\tensU_2,\dots,\tensU_d\rrbracket\|_\frob=1.\]
    where $\tensU_k\in\CC^{r_{k-1}\times n_k\times r_k}$ for $k\in[d]$ and $r_k$ is a positive integer for $k\in[d-1]$. The positive integers $r_1,r_2,\dots,r_{d-1}$ are referred to as the \emph{bond dimensions} in physics. 
\end{definition}

Note that NTT decomposition provides a unit-norm low-rank approximation to any tensor instead of an exact decomposition of a tensor. If $\|\tensA\|_\frob=1$, the tensor $\tensA$ admits an \emph{exact} NTT decomposition of the form 
\[\tensA=\llbracket\tensU_1,\tensU_2,\dots,\tensU_d\rrbracket\quad\text{and}\quad\|\llbracket\tensU_1,\tensU_2,\dots,\tensU_d\rrbracket\|_\frob=1,\]
where the first equality holds in NTT decomposition and $r_k = \rank(\mata_{\langle k\rangle})$ for $k\in[d-1]$. In fact, the resulting NTT decomposition is equivalent to the standard TT decomposition of $\tensA$ since the TT decomposition preserves the norm. If {$\|\tensA\|_\frob\neq1$}, the NTT decomposition can be achieved by firstly performing the standard TT decomposition and subsequently normalizing the resulting TT tensor. Moreover, it satisfies the quasi-optimality; see~\cref{prop:quasi}. It is worth noting that NTT decomposition can also be defined for tensors in $\mathbb{R}^{n_1\times n_2\times\cdots\times n_d}$.

\paragraph{Orthogonalization of core tensors}
A tensor in the NTT format can be left- or right-orthogonalized via QR decomposition in a same fashion as the TT format. However, the orthogonality of the resulting core tensors is different between TT and NTT formats. Specifically, given a left-orthogonalized NTT tensor $\tensX=\llbracket\tensU_1,\tensU_2,\dots,\tensU_d\rrbracket\in\tensN_\vecr$, i.e., $\leftunfolding(\tensU_k)^\dagger\leftunfolding(\tensU_k)=\matI_{r_{k}}$ for $k=[d-1]$, it follows from $\|\tensX\|_\frob=1$ and $\leftunfolding(\tensU_d)\in\CC^{r_{d-1}n_d}$ that
\begin{equation}
    \label{eq:left_orth}\leftunfolding(\tensU_d)^\dagger\leftunfolding(\tensU_d)=\|\tensU_d\|_\frob^2=\|\matx_{\leq k-1}^\dagger\rightunfolding(\tensU_d)\|_\frob^2=\|\matx_{\langle d-1\rangle}\|_\frob^2=\|\tensX\|_\frob^2=1
\end{equation}
since $\matx_{\leq k-1}^\dagger\matx_{\leq k-1}^{}=\matI_{r_{d-1}}$. Therefore, in contrast with TT decomposition, where the last core $\tensU_d$ does not satisfy left-orthogonality after left orthogonalization, all core tensors $\tensU_k$, including $\tensU_d$, are left-orthogonal in NTT decomposition.

\subsection{NTT-SVD algorithm}
Given a full tensor $\tensA\in\CC^{n_1\times n_2\times\cdots\times n_d}$ and an array $\vecr=(1,r_1,r_2,\dots,r_{d-1},1)$, a natural question is how to compute the best rank-$\vecr$ approximation of $\tensA$ in the NTT format, i.e., the following metric projection
\[\proj_{\tensN_\vecr}(\tensA):=\argmin_{\tensX\in\tensN_\vecr}\|\tensX-\tensA\|_\frob.\]

Note that the set
\[\tensN_\vecr=\tensM_\vecr\cap \tensB_1=\{\tensX\in\CC^{n_1\times n_2\times\cdots\times n_d}:\ranktt(\tensX)=\vecr\ \text{and}\ \|\tensX\|_\frob=1\}\]
is the intersection of the set $\tensM_\vecr$ in~\cref{eq:Mr} and the sphere $\tensB_1=\{\tensX\in\CC^{n_1\times n_2\times\cdots\times n_d}:\|\tensX\|_\frob=1\}$. Consequently, it is possible to construct the projection onto $\tensN_\vecr=\tensM_\vecr\cap \tensB_1$ via projections onto $\tensM_\vecr$ and $\tensB_1$ sequentially, which are exactly the best rank-$\vecr$ approximation in the TT format and normalization.

It is known that the best rank-$\vecr$ approximation in the TT format does not enjoy a closed-form expression~\cite{oseledets2011tensor}, leading to the downside that the projection $\proj_{\tensN_\vecr}(\tensA)$ is also impractical. Therefore, we consider an approximate projection by firstly implementing TT-SVD on $\tensA$ to yield a rank-$\vecr$ approximation $\proj^{\mathrm{TTSVD}}_\vecr(\tensA)\in\tensM_\vecr$, and normalizing $\proj^{\mathrm{TTSVD}}_\vecr(\tensA)$ onto~$\tensB_1$, i.e.,
\[\proj^{\mathrm{NTTSVD}}_\vecr(\tensA) := \proj_{\tensB_1}(\proj^{\mathrm{TTSVD}}_\vecr(\tensA));\]
see the flowchart in~\cref{fig:TT-SVD}. We refer to the approximate projection $\proj^{\mathrm{NTTSVD}}_\vecr$ as the \emph{NTT-SVD algorithm}. It should be noted that the two operations can not be switched. In practice, the normalization onto $\tensB_1$ can be efficiently computed by normalizing the last core tensor $\tensU_d$ since the core tensors $\tensU_1,\tensU_2,\dots,\tensU_{d-1}$ generated by the TT-SVD algorithm are left-orthogonal and $\|\llbracket\tensU_1,\tensU_2,\dots,\tensU_d\rrbracket\|_\frob=\|\tensU_d\|_\frob$ from~\cref{eq:left_orth}.

\begin{figure}[htbp]
    \centering
    \begin{tikzpicture}[scale=1]
        \footnotesize
        \def\x{2.25};
        \def\y{1.3};
        \def\h{0.3};
        \node (X) at (0,0) {$\tensA$};
        \node (X1) at ($(X)+(\x,0)$) {$\mata_{(1)}\approx\matu_1^{}\ \ \matS_1^{}\matv_1^\top$};
        \draw[->] (X) -- (X1);
        \draw[thick,dashed] ($(X1)+(-0.22,\h)$) rectangle ($(X1)+(0.25,-\h)$); 
        \node[above] at ($0.75*(X)+0.25*(X1)$) {\scriptsize mat};

        \node (U1) at ($(X1)+(0,-\y)$) {
            \begin{tikzpicture}
                \def\l{0.5}
                \node at ($0.5*(\l,\l)$) {$\tensU_1$};
                \draw (\l,\l) -- (\l+0.2,\l+0.15) -- (0.2,\l+0.15) -- (0,\l) -- cycle;
            \end{tikzpicture}
        };
        \draw[thick,->] ($(X1)+(0,-\h)$) -- (U1);

        \draw[thick,dotted] ($(X1)+(0.38,\h)$) rectangle ($(X1)+(1.22,-\h)$); 
        \node (T1) at ($(X1)+1.5*(\x,0)$) {$\matt\approx\matu_2^{}\ \ \matS_2^{}\matv_2^\top$};
        \draw[->] (X1) -- (T1);
        \node[above] at ($0.47*(X1)+0.53*(T1)$) {\scriptsize ten, mat};
        
        \draw[thick,dashed] ($(T1)+(-0.45,\h)$) rectangle ($(T1)+(0.08,-\h)$); 
        \node (U2) at ($(T1)+(-0.15-0.025,-\y)$) {
            \begin{tikzpicture}
                \def\l{0.5}
                \node at ($0.5*(\l,\l)$) {$\tensU_2$};
                \draw (0,0) rectangle (\l,\l);
                \draw (\l,\l) -- (\l+0.2,\l+0.15) -- (0.2,\l+0.15) -- (0,\l);
                \draw (\l,\l) -- (\l+0.2,\l+0.15) -- (\l+0.2,0.15) -- (\l,0);                    
            \end{tikzpicture}
        };
        \draw[->] ($(T1)+(-0.15-0.025,-\h)$) -- (U2);
        
        \draw[thick,dotted] ($(T1)+(0.22,\h)$) rectangle ($(T1)+(1,-\h)$); 
        \node (dots) at ($(T1)+0.8*(\x,0)$) {$\cdots$};
        \draw[->] (T1) -- (dots);

        \node (Td1) at ($(dots)+0.4*(\x,0)$) {$\matT$};

        \node (Td) at ($(Td1)+0.62*(\x,0)$) {$\matT$};

        \draw[->] (dots) -- (Td1);
        \draw[thick,->] (Td1) -- (Td);
        \node (Ud) at ($(Td)+1.1*(\x,0)$) {
            \begin{tikzpicture}
                \def\l{0.5}
                \node at ($0.5*(\l,\l)$) {$\tensU_d$};
                \draw (0,0) rectangle (\l,\l);
            \end{tikzpicture}
        };
        \node (Ud1) at ($(Td)+1.1*(\x,0)+(0,-\y)$) {
            \begin{tikzpicture}
                \def\l{0.5}
                \node at ($0.5*(\l,\l)$) {$\tensU_d$};
                \draw (0,0) rectangle (\l,\l);
            \end{tikzpicture}
        };
        \draw[->] ($(Td)+(0.15,0)$) -- (Ud);
        \draw[->] (Ud) -> (Ud1);
        \node[left] at ($0.5*(Td)+0.5*(0,\h)+0.5*(Ud)+(0.6,0.1)$) {$\matT/\|\matT\|_\frob$};

        \draw[thick, densely dotted, color=gray!80!white,rounded corners=2pt] ($(X)+(-0.3,1)$) rectangle ($(Td1)+(0.4,-0.5)-(0,\y)$); 

        \node[fill=gray!20!white,draw=gray,dashed,thick,inner sep=4pt,rounded corners=2pt] (TTSVD) at ($0.5*(X)+0.5*(Td1)+(0,1.1)$) {\normalsize TT-SVD algorithm $\proj^{\mathrm{TTSVD}}_\vecr$};

        \draw[thick, densely dotted, color=gray!80!white,rounded corners=2pt] ($(Td)+(-0.5,1)$) rectangle ($(Ud)+(0.5,-0.5)-(0,\y)$); 

        \node[fill=gray!20!white,draw=gray,dashed,thick,inner sep=4pt,rounded corners=2pt] (Norm) at ($0.5*(Ud)+0.5*(Td)+(0,1.1)$) {\normalsize Normalization $\proj_{\tensB_1}^{{\color{gray!20!white}T}}$};

        \node[fill=gray!20!white,draw,thick,inner sep=5pt] (approj) at ($0.5*(TTSVD)+0.5*(Norm)+(0,\y)$) {\normalsize NTT-SVD algorithm $\proj^{\mathrm{NTTSVD}}_\vecr$};

        \draw[thick,->] (approj) -- (TTSVD);
        \draw[thick,->] (approj) -- (Norm);
    \end{tikzpicture}
    \caption{Flowchart of the NTT-SVD algorithm. Mat: matricization; ten: tensorization}
    \label{fig:TT-SVD}
\end{figure}
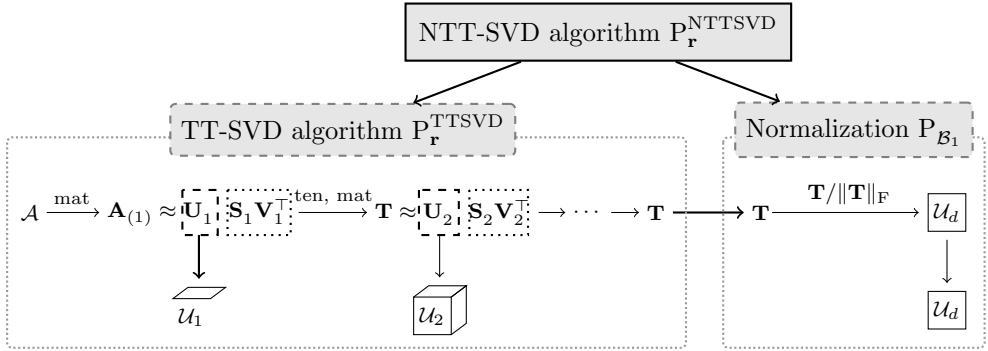

The proposed approximate projection $\proj^{\mathrm{NTTSVD}}_\vecr$ is not guaranteed to be a metric projection $\proj_{\tensN_\vecr}$. Nevertheless, the following proposition illustrates the relationship between $\proj^{\mathrm{NTTSVD}}_\vecr$ and $\proj_{\tensN_\vecr}$, which is referred to as quasi-optimality.
\begin{proposition}[quasi-optimality]\label{prop:quasi}
    The approximate projection satisfies 
    \begin{equation}
        \label{eq:quasi}
        \|\proj_{\tensN_\vecr}(\tensA)-\tensA\|_\frob\leq\|\proj^{\mathrm{NTTSVD}}_\vecr(\tensA)-\tensA\|_\frob\leq (2\sqrt{d-1}+1)\|\proj_{\tensN_\vecr}(\tensA)-\tensA\|_\frob
    \end{equation}
    for any tensor $\tensA\in\CC^{n_1\times n_2\times\cdots\times n_d}$ and rank parameter $\vecr$.
\end{proposition}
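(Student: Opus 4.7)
The left inequality is immediate: by construction $\proj^{\mathrm{NTTSVD}}_\vecr(\tensA)\in\tensN_\vecr$, so the metric projection $\proj_{\tensN_\vecr}(\tensA)$ cannot be farther from $\tensA$. The work is in the right inequality. My plan is to write $\tensA^{\mathrm{TT}}:=\proj^{\mathrm{TTSVD}}_\vecr(\tensA)$ and $\tensA^{\mathrm{NTT}}:=\proj^{\mathrm{NTTSVD}}_\vecr(\tensA)=\tensA^{\mathrm{TT}}/\|\tensA^{\mathrm{TT}}\|_\frob$, and split the error via the triangle inequality
\[
\|\tensA^{\mathrm{NTT}}-\tensA\|_\frob \leq \|\tensA^{\mathrm{NTT}}-\tensA^{\mathrm{TT}}\|_\frob + \|\tensA^{\mathrm{TT}}-\tensA\|_\frob.
\]
The first term measures the cost of normalization, while the second is controlled by the TT quasi-optimality~\eqref{eq:quasi_tt}.

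For the second term, the inclusion $\tensN_\vecr\subseteq\tensM_\vecr$ yields $\|\proj_{\tensM_\vecr}(\tensA)-\tensA\|_\frob\leq \|\proj_{\tensN_\vecr}(\tensA)-\tensA\|_\frob$, and combined with~\eqref{eq:quasi_tt} gives
\[
\|\tensA^{\mathrm{TT}}-\tensA\|_\frob \leq \sqrt{d-1}\,\|\proj_{\tensN_\vecr}(\tensA)-\tensA\|_\frob.
\]
For the first term, a direct computation shows $\|\tensA^{\mathrm{NTT}}-\tensA^{\mathrm{TT}}\|_\frob=\bigl|1-\|\tensA^{\mathrm{TT}}\|_\frob\bigr|$. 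Using that $\|\proj_{\tensN_\vecr}(\tensA)\|_\frob=1$ and the reverse triangle inequality, I would then bound
\[
\bigl|1-\|\tensA^{\mathrm{TT}}\|_\frob\bigr|=\bigl|\|\proj_{\tensN_\vecr}(\tensA)\|_\frob-\|\tensA^{\mathrm{TT}}\|_\frob\bigr|\leq \|\tensA^{\mathrm{TT}}-\proj_{\tensN_\vecr}(\tensA)\|_\frob,
\]
and apply the triangle inequality once more through $\tensA$ to obtain
\[
\|\tensA^{\mathrm{TT}}-\proj_{\tensN_\vecr}(\tensA)\|_\frob\leq \|\tensA^{\mathrm{TT}}-\tensA\|_\frob+\|\tensA-\proj_{\tensN_\vecr}(\tensA)\|_\frob\leq (\sqrt{d-1}+1)\|\proj_{\tensN_\vecr}(\tensA)-\tensA\|_\frob.
\]

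Adding the two contributions yields $(\sqrt{d-1}+1)+\sqrt{d-1}=2\sqrt{d-1}+1$, which is exactly the claimed constant. The only real decision point is how to estimate the normalization error; my plan routes it through $\proj_{\tensN_\vecr}(\tensA)$ (which has unit norm by definition) rather than trying to relate $\|\tensA^{\mathrm{TT}}\|_\frob$ directly to $\|\tensA\|_\frob$, since the latter does not lead to the desired dimension-dependent constant. A minor technicality to mention is the degenerate case $\tensA^{\mathrm{TT}}=0$; this can only happen if $\tensA=0$ (up to the quasi-optimality bound), in which case both sides of~\eqref{eq:quasi} vanish and the statement holds trivially, so the argument above covers the remaining case.
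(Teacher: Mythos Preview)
Your argument is correct and follows essentially the same route as the paper: split via $\tensA^{\mathrm{TT}}$, bound the TT error by~\eqref{eq:quasi_tt} together with $\tensN_\vecr\subseteq\tensM_\vecr$, and bound the normalization error by the distance from $\tensA^{\mathrm{TT}}$ to the unit-norm point $\proj_{\tensN_\vecr}(\tensA)$, then triangle through $\tensA$. The only cosmetic difference is that you compute $\|\tensA^{\mathrm{NTT}}-\tensA^{\mathrm{TT}}\|_\frob=\bigl|1-\|\tensA^{\mathrm{TT}}\|_\frob\bigr|$ and invoke the reverse triangle inequality, whereas the paper phrases the same step as ``$\proj_{\tensB_1}$ is a metric projection and $\proj_{\tensN_\vecr}(\tensA)\in\tensB_1$''; one small slip is your remark that ``both sides of~\eqref{eq:quasi} vanish'' when $\tensA=0$, since in fact $\|\proj_{\tensN_\vecr}(0)-0\|_\frob=1$ and $\proj^{\mathrm{NTTSVD}}_\vecr(0)$ is undefined, but this degenerate case is irrelevant to the main argument.
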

\begin{proof}
    It is straightforward to verify that $\|\proj_{\tensN_\vecr}(\tensA)-\tensA\|_\frob\leq\|\proj^{\mathrm{NTTSVD}}_\vecr(\tensA)-\tensA\|_\frob$ since $\proj_{\tensN_\vecr}(\tensA)$ is the best rank-$\vecr$ approximation of $\tensA$ in the NTT format. For the second inequality, it holds that 
    \begin{align}
        \|\proj^{\mathrm{NTTSVD}}_\vecr(\tensA)-\tensA\|_\frob&=\|\proj_{\tensB_1}(\proj_{\tensM_\vecr}^\mathrm{TTSVD}(\tensA))-\tensA\|_\frob\nonumber\\
        &\leq\|\proj_{\tensB_1}(\proj_{\tensM_\vecr}^\mathrm{TTSVD}(\tensA))-\proj_{\tensM_\vecr}^\mathrm{TTSVD}(\tensA)\|_\frob+\|\proj_{\tensM_\vecr}^\mathrm{TTSVD}(\tensA)-\tensA\|_\frob\nonumber\\
        &\leq \|\proj_{\tensN_\vecr}(\tensA)-\proj_{\tensM_\vecr}^\mathrm{TTSVD}(\tensA)\|_\frob+\|\proj_{\tensM_\vecr}^\mathrm{TTSVD}(\tensA)-\tensA\|_\frob\label{eq:proof_1}\\
        &\leq \|\proj_{\tensN_\vecr}(\tensA)\!-\!\tensA\|_\frob+\|\tensA\!-\!\proj_{\tensM_\vecr}^\mathrm{TTSVD}(\tensA)\|_\frob+\|\proj_{\tensM_\vecr}^\mathrm{TTSVD}(\tensA)\!-\!\tensA\|_\frob\nonumber\\
        &\leq \|\proj_{\tensN_\vecr}(\tensA)-\tensA\|_\frob+2\sqrt{d-1}\|\proj_{\tensM_\vecr}(\tensA)-\tensA\|_\frob\label{eq:proof_2}\\
        &\leq (2\sqrt{d-1}+1)\|\proj_{\tensN_\vecr}(\tensA)-\tensA\|_\frob,\label{eq:proof_3}
    \end{align}
    where the inequality~\eqref{eq:proof_1} follows from the metric projection $\proj_{\tensB_1}$ and $\proj_{\tensN_\vecr}(\tensA)\in\tensB_1$, the inequality~\eqref{eq:proof_2} follows from the quasi-optimality~\eqref{eq:quasi_tt} of TT format, and the inequality~\eqref{eq:proof_3} follows from $\tensN_\vecr\subseteq\tensM_\vecr$.
\end{proof}

\subsection{Manifold structure}
In fact, the NTT decomposition generates sets of low-rank tensors with manifold structure. Specifically, given an array of integers $\vecr=(1,r_1,r_2,\dots,r_{d-1},1)$, the set of rank-$\vecr$ tensors in the NTT format
\[\tensN_\vecr=\big\{\tensX\in\CC^{n_1\times n_2\times\cdots\times n_d}:\ranktt(\tensX)=\vecr,~\|\tensX\|_\frob=1\big\}\] 
is a complex submanifold of $\CC^{n_1\times n_2\times\cdots\times n_d}$. Recall that
\[\tensN_\vecr=\tensM_\vecr\cap \tensB_1\]
is the intersection of two manifolds: the manifold of fixed-rank tensors $\tensM_\vecr$ in the TT format~\cref{eq:Mr} and the unit sphere $\tensB_1$. We observe that two manifolds $\tensM_\vecr$ and $\tensB_1$ intersect \emph{transversally}, i.e., $\tangent_\tensX\!\tensM_\vecr+\tangent_\tensX\!\tensB_1=\CC^{n_1\times n_2\times\cdots\times n_d}$ holds for all $\tensX\in\tensN_\vecr$. Therefore, it implies from~\cite[Theorem~6.30]{lee2012smooth} that $\tensN_\vecr$ is a smooth manifold.

Specifically, recall the following parametrization of tangent space of $\tensM_\vecr$ at $\tensX=\llbracket\tensU_1,\tensU_2,\dots,\tensU_d\rrbracket\in\tensM_\vecr$ (see~\cite{holtz2012manifolds,haegeman2014geometry}):
\begin{equation}
    \tangent_\tensX\!\tensM_\vecr=\left\{
        \begin{array}{l}
            ~~~\llbracket\dot{\tensU}_1,\tensU_2,\tensU_3,\dots,\tensU_d\rrbracket\\
            +~\llbracket\tensU_1,\dot{\tensU}_2,\tensU_3,\dots,\tensU_d\rrbracket\\
            ~\vdots\\
            +~\llbracket\tensU_1,\tensU_2,\dots,\tensU_{d-1},\dot{\tensU}_d\rrbracket
        \end{array}\!:\begin{array}{r}
            \dot{\tensU}_k\in\CC^{r_{k-1}\times n_k\times r_k},k\in[d],\\
            \leftunfolding(\dot{\tensU}_k)^\dagger\leftunfolding(\tensU_k)=0,k\in[d-1]
        \end{array}
    \right\}.
    \label{eq:tangent_space_TT}
\end{equation}
The tangent space of $\tensB_1$ at $\tensX\in \tensB_1$ can be represented by 
\begin{equation}
    \label{eq:Tangent_B1}
    \tangent_\tensX\!\tensB_1=\{\tensV\in\CC^{n_1\times n_2\times\cdots\times n_d}\!:\langle\tensV,\tensX\rangle=0\}.
\end{equation}
It follows that $(\tangent_\tensX\!\tensB_1)^\perp=\{t\tensX:t\in\CC\}$. Since we do not impose an orthogonality condition on $\leftunfolding(\dot{\tensU}_d)$ in~\eqref{eq:tangent_space_TT}, it holds that $t\tensX\in\tangent_\tensX\!\tensM_\vecr$ by letting $\dot{\tensU}_k=0$ for $k=1,2,\dots,d-1$ and $\dot{\tensU}_d=t\tensU_d$. Therefore, we conclude that 
\[\CC^{n_1\times n_2\times\cdots\times n_d}=(\tangent_\tensX\!\tensB_1)^\perp+\tangent_\tensX\!\tensB_1\subseteq\tangent_\tensX\!\tensM_\vecr+\tangent_\tensX\!\tensB_1\subseteq\CC^{n_1\times n_2\times\cdots\times n_d},\]
and thus $\tensN_\vecr$ is a smooth manifold.

\subsection{Riemannian geometry of NTT tensors}
We develop the Riemannian geometry of $\tensN_\vecr$, including the tangent space, the Riemannian metric, the projection onto the tangent space, and a retraction; see~\cref{fig:geomtools} for an illustration. Specifically, there are two steps for searching along $\tensN_\vecr$. 1) Projection onto the tangent space: given a point $\tensX\in\tensN_\vecr$ and a direction $\tensA\in\mathbb{C}^{n_1\times n_2\times\cdots\times n_d}$, we project $\tensA$ onto $\tangent_\tensX\!\tensN_\vecr$ and yield $\tensV$, which can be interpreted by successive projections onto $\tangent_\tensX\!\tensM_\vecr$ and $\tangent_\tensX\!\tensB_1$; 2) moving on the manifold: given $s>0$, we retract $(\tensX+s\tensV)$ onto $\tensN_\vecr$ by the NTT-SVD algorithm, where the TT-SVD algorithm and normalization are successively implemented. 

\begin{figure}[htbp]
    \centering
    \includegraphics[width=0.9\textwidth]{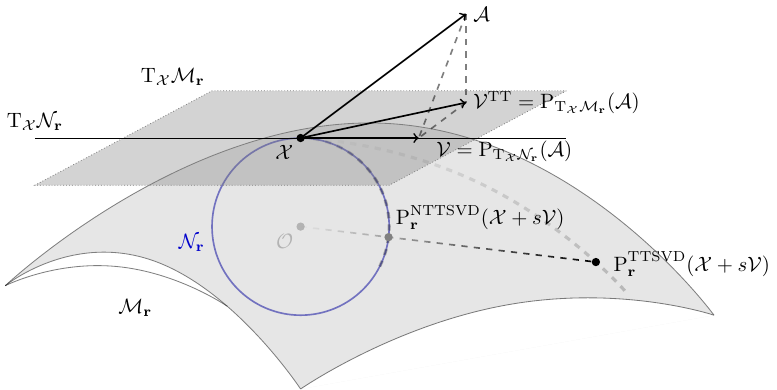}
    \caption{An illustration of the geometry of $\tensN_\vecr$. $\mathcal{O}\in\CC^{n_1\times n_2\times \cdots\times n_d}$: zero tensor.}
    \label{fig:geomtools}
\end{figure}

\paragraph{Tangent space}
Since $\tensM_\vecr$ and $\tensB_1$ intersect transversally, it follows from~\cite{lee2012smooth} that the tangent space of $\tensN_\vecr$ can be represented by the intersection of tangent spaces $\tangent_\tensX\!\tensM_\vecr$ and $\tangent_\tensX\!\tensB_1$, i.e., $\tangent_\tensX\!\tensN_\vecr=\tangent_\tensX\!\tensM_\vecr\cap\tangent_\tensX\!\tensB_1$. Therefore, we yield the following tangent space parametrization.
\begin{proposition}[tangent space]
    Given $\tensX=\llbracket\tensU_1,\tensU_2,\dots,\tensU_d\rrbracket\in\tensN_\vecr$ with left-orthogonal cores, the tangent space of $\tensN_\vecr$ at $\tensX$ can be parametrized by
    \begin{equation}
    \tangent_\tensX\!\tensN_\vecr=\left\{
        \begin{array}{l}
            ~~~\llbracket\dot{\tensU}_1,\tensU_2,\tensU_3,\dots,\tensU_d\rrbracket\\
            +~\llbracket\tensU_1,\dot{\tensU}_2,\tensU_3,\dots,\tensU_d\rrbracket\\
            ~\vdots\\
            +~\llbracket\tensU_1,\tensU_2,\dots,\tensU_{d-1},\dot{\tensU}_d\rrbracket
        \end{array}\!:\!
        \begin{array}{r}
            \dot{\tensU}_k\in\CC^{r_{k-1}\times n_k\times r_k},k\in[d],\\\leftunfolding(\dot{\tensU}_k)^\dagger\leftunfolding(\tensU_k)=0,k\in[d-1],\\
            \langle\dot{\tensU}_d,\tensU_d\rangle=0
        \end{array}
    \right\}.
    \label{eq:tangent_space_unit_TT}
\end{equation}
\end{proposition}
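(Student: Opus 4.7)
The strategy rests on the transversality already established: since $\tensN_\vecr = \tensM_\vecr \cap \tensB_1$ with $\tangent_\tensX\!\tensM_\vecr + \tangent_\tensX\!\tensB_1 = \CC^{n_1\times\cdots\times n_d}$, the general manifold-intersection theorem~\cite[Thm.~6.30]{lee2012smooth} gives $\tangent_\tensX\!\tensN_\vecr = \tangent_\tensX\!\tensM_\vecr \cap \tangent_\tensX\!\tensB_1$. Thus it suffices to start from the known TT-tangent parametrization~\eqref{eq:tangent_space_TT} and translate the defining relation $\langle\tensV,\tensX\rangle=0$ of $\tangent_\tensX\!\tensB_1$ from~\eqref{eq:Tangent_B1} into a constraint on the variations $\dot{\tensU}_k$. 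I expect the whole proof to reduce to showing
\[
\bigl\langle\llbracket\tensU_1,\dots,\dot{\tensU}_k,\dots,\tensU_d\rrbracket,\tensX\bigr\rangle = 0 \quad\text{for } k\in[d-1], \qquad \bigl\langle\llbracket\tensU_1,\dots,\tensU_{d-1},\dot{\tensU}_d\rrbracket,\tensX\bigr\rangle = \langle\dot{\tensU}_d,\tensU_d\rangle,
\]
so that the linearity of $\langle\cdot,\tensX\rangle$ collapses $\langle\tensV,\tensX\rangle=0$ to exactly the extra condition $\langle\dot{\tensU}_d,\tensU_d\rangle=0$ appearing in~\eqref{eq:tangent_space_unit_TT}.

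For each $k$, I would compute the $k$-th unfolding of the summand with $\dot{\tensU}_k$ using the recursion~\eqref{eq: recursive interface}: the unfolding is $(\matI_{n_k}\otimes\matx_{\leq k-1})\leftunfolding(\dot{\tensU}_k)\,\matx_{\geq k+1}^\top$, and similarly for $\tensX$ with $\tensU_k$ in place of $\dot{\tensU}_k$. Writing $\langle\cdot,\tensX\rangle = \mathrm{tr}\bigl(\matv_{\langle k\rangle}^\dagger\matx_{\langle k\rangle}\bigr)$ and using the left-orthogonality $\matx_{\leq k-1}^\dagger\matx_{\leq k-1}=\matI_{r_{k-1}}$, the left interface cancels and the inner product collapses to
\[
\mathrm{tr}\bigl(\conj(\matx_{\geq k+1})\,\leftunfolding(\dot{\tensU}_k)^\dagger\leftunfolding(\tensU_k)\,\matx_{\geq k+1}^\top\bigr).
\]
For $k\in[d-1]$, the gauge condition $\leftunfolding(\dot{\tensU}_k)^\dagger\leftunfolding(\tensU_k)=0$ already annihilates this expression. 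For $k=d$, the boundary convention $r_d=1$ makes $\matx_{\geq d+1}$ trivial, so the trace reduces to $\leftunfolding(\dot{\tensU}_d)^\dagger\leftunfolding(\tensU_d) = \langle\dot{\tensU}_d,\tensU_d\rangle$. Summing over $k$ gives $\langle\tensV,\tensX\rangle=\langle\dot{\tensU}_d,\tensU_d\rangle$, which proves the two sets in~\eqref{eq:tangent_space_unit_TT} agree.

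A small remaining step is to justify that the description on the right-hand side of~\eqref{eq:tangent_space_unit_TT} is a genuine parametrization, not merely a superset: this is inherited from the TT case, where the gauge conditions on the first $d-1$ cores uniquely determine the $(\dot{\tensU}_k)_{k=1}^{d}$ from $\tensV\in\tangent_\tensX\!\tensM_\vecr$ (see~\cite{holtz2012manifolds,haegeman2014geometry}); restricting further to $\langle\dot{\tensU}_d,\tensU_d\rangle=0$ only removes the single direction $t\tensU_d$ with $t\in\CC$, which is precisely the complement $(\tangent_\tensX\!\tensB_1)^\perp$ discussed immediately before the proposition. I anticipate the main obstacle to be the bookkeeping of conjugates in the inner-product identity together with the asymmetry introduced by left-orthogonalizing all $d$ cores (including $\tensU_d$, whose unit-norm condition \eqref{eq:left_orth} is what makes the $k=d$ summand survive while all earlier summands vanish).
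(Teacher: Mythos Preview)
Your proposal is correct and follows essentially the same approach as the paper: both arguments use $\tangent_\tensX\!\tensN_\vecr=\tangent_\tensX\!\tensM_\vecr\cap\tangent_\tensX\!\tensB_1$, then compute $\langle\tensV,\tensX\rangle$ summand-by-summand, exploiting left-orthogonality and the gauge conditions so that only the $k=d$ term survives as $\langle\dot{\tensU}_d,\tensU_d\rangle$. Your version is slightly more explicit in writing out the unfolding and trace computation via~\eqref{eq: recursive interface}, and you add the (helpful, though not strictly needed for the set equality) remark on uniqueness of the parametrization, which the paper leaves implicit.
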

\begin{proof}
    Denote the right-hand side by $T$. On the one hand, for any vector $\tensV\in T$, it holds that $\tensV\in\tangent_\tensX\!\tensM_\vecr$ and 
    \begin{align*}
        \langle\tensV,\tensX\rangle&=\langle\llbracket\dot{\tensU}_1,\tensU_2,\dots,\tensU_d\rrbracket,\llbracket\tensU_1,\tensU_2,\dots,\tensU_d\rrbracket\rangle+\langle\llbracket\tensU_1,\dot{\tensU}_2,\tensU_3,\dots,\tensU_d\rrbracket,\llbracket\tensU_1,\tensU_2,\dots,\tensU_d\rrbracket\rangle\\
            &~~~~+\cdots+\langle\llbracket\tensU_1,\tensU_2,\dots,\tensU_{d-1},\dot{\tensU}_d\rrbracket,\llbracket\tensU_1,\tensU_2,\dots,\tensU_d\rrbracket\rangle\\
            &=\langle\llbracket\tensU_1,\tensU_2,\dots,\tensU_{d-1},\dot{\tensU}_d\rrbracket,\llbracket\tensU_1,\tensU_2,\dots,\tensU_d\rrbracket\rangle\\
            &=\langle\dot{\tensU}_d,\tensU_d\rangle=0,
    \end{align*}
    where the equalities follow from the orthogonality conditions $\leftunfolding(\dot{\tensU}_k)^\dagger\leftunfolding(\tensU_k)=0$ for $k\in[d-1]$ and $\langle\dot{\tensU}_d,\tensU_d\rangle=0$. Therefore, we have $\tensV\in\tangent_\tensX\!\tensB_1$ from~\cref{eq:Tangent_B1} and thus $T\subseteq\tangent_\tensX\!\tensM_\vecr\cap\tangent_\tensX\!\tensB_1=\tangent_\tensX\!\tensN_\vecr$.

    On the other hand, for a tangent vector $\tensV\in\tangent_\tensX\!\tensN_\vecr\subseteq\tangent_\tensX\!\tensM_\vecr$, there exists $\dot\tensU_k\in\CC^{r_{k-1}\times n_k\times r_k}$ such that $\tensV=\llbracket\dot{\tensU}_1,\tensU_2,\tensU_3,\dots,\tensU_d\rrbracket+\llbracket\tensU_1,\dot{\tensU}_2,\tensU_3,\dots,\tensU_d\rrbracket+\cdots+\llbracket\tensU_1,\tensU_2,\dots,\tensU_{d-1},\dot{\tensU}_d\rrbracket$
    and $\leftunfolding(\dot{\tensU}_k)^\dagger\leftunfolding(\tensU_k)=0$ for $k\in[d-1]$. Since $\tensV\in\tangent_\tensX\!\tensN_\vecr\subseteq\tangent_\tensX\!\tensB_1$, it holds that $\langle\tensV,\tensX\rangle=0$, i.e., $\langle\dot{\tensU}_d,\tensU_d\rangle=0$. Therefore, $\tensV\in T$. 
    
    Consequently, it holds that $T=\tangent_\tensX\!\tensN_\vecr$.
\end{proof}

Note that $\langle\dot{\tensU}_d,\tensU_d\rangle=0$ is equivalent to $\leftunfolding(\dot{\tensU}_d)^\dagger\leftunfolding(\tensU_d)=0$. Therefore, the parameter $\dot\tensU_d$ in~\cref{eq:tangent_space_unit_TT} satisfies the orthogonality condition while $\dot\tensU_d$ in~\cref{eq:tangent_space_TT} is arbitrary. In practice, it suffices to store the parameters $\dot{\tensU}_1,\dot{\tensU}_2,\dots,\dot{\tensU}_d$ for a tangent vector.

\paragraph{Projection onto the tangent space}
Subsequently, we compute the projection of a tensor onto the tangent space. We adopt the inner product $\langle\cdot,\cdot\rangle$ as the Riemannian metric of $\tensN_\vecr$.
\begin{proposition}\label{prop:proj_tangent}
    Given $\tensX=\llbracket\tensU_1,\tensU_2,\dots,\tensU_d\rrbracket\in\tensN_\vecr$ with left-orthogonal cores, the projection of $\tensA\in\CC^{n_1\times n_2\times\dots\times n_d}$ onto the tangent space $\tangent_\tensX\!\tensN_\vecr$ can be given by a tangent vector $\proj_{\tangent_\tensX\!\tensN_\vecr}(\tensA)\in\tangent_\tensX\!\tensN_\vecr$ with parameters $\tensW_k\in\CC^{r_{k-1}\times n_k\times r_k}$ satisfying
    \begin{equation*}
        \begin{aligned}
            \leftunfolding(\tensW_k) &= (\matI_{r_{k-1}n_k}-\matP_k)(\matI_{n_k}\ox\matx_{\leq k-1})^\dagger\mata_{\langle k\rangle}\conj(\matx_{\geq k+1})(\matx_{\geq k+1}^\top\conj(\matx_{\geq k+1}))^{-1}\\
            \rmvec(\tensW_d) &= (\matI_{r_{d-1}n_d}-\matP_d)(\matI_{n_d}\ox\matx_{\leq d-1})^\dagger\rmvec(\mata)
        \end{aligned}
    \end{equation*}
    for $k\in[d-1]$, where $\matP_k=\leftunfolding(\tensU_k)\leftunfolding(\tensU_k)^\dagger$ is the orthogonal projection operator onto the range of $\leftunfolding(\tensU_k)$.
\end{proposition}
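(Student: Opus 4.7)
My plan is to exploit the gauge-fixed tangent parametrization~\eqref{eq:tangent_space_unit_TT} together with a pairwise orthogonality of the $d$ tangent summands to decouple the projection into $d$ independent least-squares problems, each solved by the complex normal equations followed by a gauge projection.

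The first step is to establish that the summands $\tensB_k:=\llbracket\tensU_1,\dots,\dot\tensU_k,\dots,\tensU_d\rrbracket$ for $k\in[d]$ are pairwise $\langle\cdot,\cdot\rangle$-orthogonal. For $i<j$, contracting $\langle\tensB_i,\tensB_j\rangle$ from the left via the interface recursion~\eqref{eq: recursive interface}, the left-orthogonality of $\tensU_1,\dots,\tensU_{i-1}$ collapses the first $i-1$ contractions to the identity, and the gauge $\leftunfolding(\dot\tensU_i)^\dagger\leftunfolding(\tensU_i)=0$ then annihilates the expression; the NTT-specific condition $\langle\dot\tensU_d,\tensU_d\rangle=0$ only tightens the admissible set. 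Consequently, minimizing $\|\sum_{k=1}^{d}\tensB_k-\tensA\|_\frob^2$ decouples into $d$ independent projections of $\tensA$ onto the subspace spanned by each $\tensB_k$ individually.

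For $k\in[d-1]$, the identities in~\eqref{eq: matricization} and~\eqref{eq: recursive interface} give $(\tensB_k)_{\langle k\rangle}=(\matI_{n_k}\ox\matx_{\leq k-1})\leftunfolding(\dot\tensU_k)\matx_{\geq k+1}^\top$, turning the subproblem into a matrix least-squares in $\leftunfolding(\dot\tensU_k)$. The left factor $\matI_{n_k}\ox\matx_{\leq k-1}$ has orthonormal columns by left-orthogonality of $\tensU_1,\dots,\tensU_{k-1}$, and $\matx_{\geq k+1}^\top\conj(\matx_{\geq k+1})$ is invertible since $\matx_{\geq k+1}$ has full column rank~$r_k$. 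The complex (Wirtinger) normal equations then produce the unconstrained minimizer $\matM_k:=(\matI_{n_k}\ox\matx_{\leq k-1})^\dagger\mata_{\langle k\rangle}\conj(\matx_{\geq k+1})(\matx_{\geq k+1}^\top\conj(\matx_{\geq k+1}))^{-1}$, and a standard Lagrangian argument shows that enforcing the gauge amounts to left-multiplying $\matM_k$ by $\matI-\matP_k$, recovering the stated formula. The case $k=d$ is structurally identical with $r_d=1$: the right factor $\matx_{\geq d+1}$ disappears, the subproblem collapses to a vector projection onto the column span of $\matI_{n_d}\ox\matx_{\leq d-1}$, and the gauge projection yields the formula for $\rmvec(\tensW_d)$.

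The main obstacle is largely bookkeeping, namely tracking complex conjugates in the Wirtinger normal equations so that the right-factor correction appears as $\conj(\matx_{\geq k+1})(\matx_{\geq k+1}^\top\conj(\matx_{\geq k+1}))^{-1}$ rather than its dagger version, and verifying that the gauge projector $\matI-\matP_k$ can be applied post-hoc because it commutes with the least-squares operation on the gauge-complement subspace. The orthogonality of the summands reduces to the classical TT left-contraction argument and requires no additional idea beyond the left-orthogonality of all cores (including $\tensU_d$ via~\eqref{eq:left_orth}) available in the NTT setting.
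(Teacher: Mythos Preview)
Your proposal is correct and follows essentially the same route as the paper: both rely on the pairwise orthogonality of the gauge-fixed summands to decouple the projection into $d$ independent blocks, then solve each block using the interface-matrix representation and the left-orthogonality of the cores. The only cosmetic difference is that the paper proceeds directly via the variational characterization $\langle\tensA-\proj_{\tangent_\tensX\!\tensN_\vecr}(\tensA),\tensV_k\rangle=0$ (invoking the summand orthogonality implicitly) and obtains the constrained formula in one step, whereas you phrase it as a least-squares minimization, first computing the unconstrained minimizer and then applying the gauge projector $\matI-\matP_k$ post hoc.
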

\begin{proof}
    Since $\proj_{\tangent_\tensX\!\tensN_\vecr}(\tensA)\in\tangent_\tensX\!\tensN_\vecr$, it can be parametrized by~\eqref{eq:tangent_space_unit_TT} with parameters $\tensW_k\in\CC^{r_{k-1}\times n_k\times r_k}$ for $k=1,2,\dots,d$. We aim to figure out the parameters.

    Recall the parametrization~\eqref{eq:tangent_space_unit_TT} and denote each summand by $\tensV_k$, i.e., $\tensV=\tensV_1+\tensV_2+\cdots+\tensV_d$. We observe that $\langle\tensA-\proj_{\tangent_\tensX\!\tensN_\vecr}(\tensA),\tensV\rangle=0$ holds for any tangent vector $\tensV\in\tangent_\tensX\!\tensN_\vecr$. Then, we obtain that
    \begin{align*}
        &~~~~\langle\tensA-\proj_{\tangent_\tensX\!\tensN_\vecr}(\tensA),\tensV_k\rangle= \langle\tensA-\proj_{\tangent_\tensX\!\tensN_\vecr}(\tensA),\llbracket\tensU_1,\dots,\tensU_{k-1},\dot\tensU_k,\tensU_{k+1},\dots,\tensU_d\rrbracket\rangle\\
        &=\langle\mata_{\langle k\rangle}-(\matI_{n_k}\ox\matx_{\leq k-1})\leftunfolding(\tensW_k)\matx_{\geq k+1}^\top,(\matI_{n_k}\ox\matx_{\leq k-1})\leftunfolding(\dot\tensU_k)\matx_{\geq k+1}^\top\rangle\\
        &=\langle(\matI_{n_k}\ox\matx_{\leq k-1})^\dagger\mata_{\langle k\rangle}\conj(\matx_{\geq k+1})-\leftunfolding(\tensW_k)\matx_{\geq k+1}^\top\conj(\matx_{\geq k+1}),\leftunfolding(\dot\tensU_k)\rangle\\
        &=0
    \end{align*}
    holds for any $\leftunfolding(\dot\tensU_k)^\dagger\leftunfolding(\tensU_k)=0$ for $k\in[d]$,
    where we use the orthogonality of interface matrices~\eqref{eq: recursive interface} and the facts that $\matx_{\leq k-1}=(\tensV_k)_{\leq k-1}$ and $\matx_{\geq k+1} = (\tensV_k)_{\geq k+1}$. Therefore, we conclude that 
    \[\leftunfolding(\tensW_k) = (\matI_{r_{k-1}n_k}-\matP_k)(\matI_{n_k}\ox\matx_{\leq k-1})^\dagger\mata_{\langle k\rangle}\conj(\matx_{\geq k+1})(\matx_{\geq k+1}^\top\conj(\matx_{\geq k+1}))^{-1}\]
    for $k=1,2,\dots,d-1$ and 
    \[\rmvec(\tensW_d) = (\matI_{r_{d-1}n_d}-\matP_d)(\matI_{n_d}\ox\matx_{\leq d-1})^\dagger\rmvec(\mata).\]
\end{proof}

It follows from the proof of Proposition~\ref{prop:proj_tangent} that the projection onto the tangent space $\tangent_\tensX\!\tensN_\vecr$ can be expressed by the projection onto the tangent space $\tangent_\tensX\!\tensM_\vecr$ and onto the tangent space $\proj_{\tangent_\tensX\!\tensB_1}$ of the unit sphere. 
\begin{corollary}
    It holds that
    \[\proj_{\tangent_\tensX\!\tensN_\vecr} = \proj_{\tangent_\tensX\!\tensB_1}\circ\proj_{\tangent_\tensX\!\tensM_\vecr}=\proj_{\tangent_\tensX\!\tensM_\vecr}\circ\proj_{\tangent_\tensX\!\tensB_1}.\]
\end{corollary}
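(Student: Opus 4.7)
The plan is to reduce the corollary to an abstract projection identity and then verify its hypothesis using facts already established in the excerpt. Concretely, I would first prove the general lemma: \emph{if $V$ and $W$ are closed linear subspaces of a Hermitian space with $W^\perp \subseteq V$, then $\proj_V \circ \proj_W = \proj_W \circ \proj_V = \proj_{V \cap W}$.} Applying this with $V = \tangent_\tensX\!\tensM_\vecr$ and $W = \tangent_\tensX\!\tensB_1$, together with the identification $\tangent_\tensX\!\tensN_\vecr = \tangent_\tensX\!\tensM_\vecr \cap \tangent_\tensX\!\tensB_1$ used above Proposition~\ref{prop:proj_tangent}, will yield the corollary immediately.

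The containment $W^\perp \subseteq V$ is exactly the observation made in the transversality discussion: on the one hand $(\tangent_\tensX\!\tensB_1)^\perp = \{t\tensX : t \in \CC\}$ by~\eqref{eq:Tangent_B1}; on the other, setting $\dot\tensU_k = 0$ for $k \in [d-1]$ and $\dot\tensU_d = t\tensU_d$ in the parametrization~\eqref{eq:tangent_space_TT} produces $t\tensX \in \tangent_\tensX\!\tensM_\vecr$ for every $t \in \CC$, because no orthogonality constraint is imposed on $\dot\tensU_d$ there. Thus the lemma is applicable in our setting.

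For the lemma itself, I would first note that for any $v \in V$ the decomposition $v = \proj_W v + \proj_{W^\perp} v$ satisfies $\proj_{W^\perp} v \in W^\perp \subseteq V$, which forces $\proj_W v \in V$ as well; this simultaneously gives $\proj_W(V) \subseteq V \cap W$ and the orthogonal splitting $V = (V \cap W) \oplus W^\perp$. Then, for arbitrary $x$, writing $x = \proj_W x + \proj_{W^\perp} x$ and using $\proj_{W^\perp} x \in V$ yields $\proj_V x = \proj_V \proj_W x + \proj_{W^\perp} x$; meanwhile $\proj_V \proj_W x \in V$, and the splitting of $V$ combined with a short orthogonality check against $W$ (testing the $W^\perp$-component of $\proj_V \proj_W x$ against itself) shows $\proj_V \proj_W x \in V \cap W$. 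From here $\proj_W \proj_V x = \proj_V \proj_W x$, and both equal $\proj_{V \cap W} x$ because the residual $x - \proj_V \proj_W x = \proj_{W^\perp} x + (\proj_W x - \proj_V \proj_W x)$ is orthogonal to $V \cap W$.

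The main technical step is the identification $\proj_V \proj_W x \in V \cap W$, which is precisely where the containment $W^\perp \subseteq V$ enters; everything else is bookkeeping. An alternative route would be to substitute the explicit formulas of Proposition~\ref{prop:proj_tangent} together with $\proj_{\tangent_\tensX\!\tensB_1}(\tensA) = \tensA - \langle \tensA, \tensX \rangle \tensX$ into each composition and match coefficients core by core, exploiting that the first $d-1$ parameter formulas in Proposition~\ref{prop:proj_tangent} coincide with those for $\proj_{\tangent_\tensX\!\tensM_\vecr}$ while the last acquires the extra projector $(\matI - \matP_d)$ contributed by $\proj_{\tangent_\tensX\!\tensB_1}$. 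That computation is feasible but tedious; the abstract argument above is coordinate-free, shorter, and makes the geometric content transparent.
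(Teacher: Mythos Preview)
Your proposal is correct and takes a genuinely different route from the paper. The paper does not give a separate argument for the corollary; it simply remarks that it ``follows from the proof of Proposition~\ref{prop:proj_tangent}.'' In other words, the paper's implicit justification is the computational one you mention as an alternative at the end: the parameter formulas for $\proj_{\tangent_\tensX\!\tensN_\vecr}$ coincide with those for $\proj_{\tangent_\tensX\!\tensM_\vecr}$ in the first $d-1$ cores, while the $d$-th core acquires the extra factor $(\matI-\matP_d)$, which is exactly the contribution of $\proj_{\tangent_\tensX\!\tensB_1}$ on a left-orthogonalized representation. Your abstract lemma---that $W^\perp\subseteq V$ forces $\proj_V\proj_W=\proj_W\proj_V=\proj_{V\cap W}$---is a cleaner, coordinate-free explanation: it isolates the single geometric fact driving the result, namely $(\tangent_\tensX\!\tensB_1)^\perp=\CC\tensX\subseteq\tangent_\tensX\!\tensM_\vecr$, already noted in the transversality discussion. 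The paper's route is tied to the explicit formulas and requires no new ingredient; your route makes the structure transparent and would apply verbatim to any additional constraint whose normal space lies inside $\tangent_\tensX\!\tensM_\vecr$.
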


In practice, we observe that the parameter $\tensW_k$ involves $(\matx_{\geq k+1}^\top\conj(\matx_{\geq k+1}))^{-1}$, which can be ill-conditioned. Therefore, inspired by~\cite[\S 3.3]{steinlechner2016riemannian}, we consider the representation $\tensX=\llbracket\tensU_1,\dots,\tensU_{k-1},\tilde\tensU_k,\tensY_{k+1},\dots,\tensY_d\rrbracket$ with $k$-orthogonal parameters, i.e., $\tensU_1,\dots,\tensU_{k-1}$ are left-orthogonal, $\tensY_{k+1},\dots,\tensY_d$ are right-orthogonal, and $\tilde\tensU_k$ is not guaranteed to be left- or right-orthogonal. Subsequently, each summand $\tensV_k$ can be $k$-orthogonalized to $\tensV_k=\llbracket\tensU_1,\dots,\tensU_{k-1},\tilde\tensW_k,\tensY_{k+1},\dots,\tensY_d\rrbracket$. Consequently, we obtain an equivalent representation of $\proj_{\tangent_\tensX\!\tensN_\vecr}(\tensA)\in\tangent_\tensX\!\tensN_\vecr$:
\begin{equation}
    \label{eq:proj}
        \proj_{\tangent_\tensX\!\tensN_\vecr}(\tensA)=\sum_{k=1}^d\llbracket\tensU_1,\dots,\tensU_{k-1},\tilde\tensW_k,\tensY_{k+1},\dots,\tensY_d\rrbracket
\end{equation}
with $\leftunfolding(\tilde{\tensW_k}) = (\matI_{r_{k-1}n_k}-\matP_k)(\matI_{n_k}\ox\matx_{\leq k-1})^\dagger\mata_{\langle k\rangle}\conj(\matx_{\geq k+1})$. Note that $\matx_{\geq k+1}\in\St_{\CC}(r_k,n_{k+1}n_{k+1}\cdots n_d)$ due to the $k$-orthogonality.

\paragraph{Retraction}
For navigating on the manifold $\tensN_\vecr$, a \emph{retraction} mapping is required. Recall that a mapping $\retr:\tangent\!\tensN_\vecr\to\tensN_\vecr$ is called a retraction~\cite[Definition 1]{absil2012projection} on $\tensN_\vecr$ around $\tensX\in\tensN_\vecr$ if there exists a neighborhood $U$ of $(\tensX,0)\in\tangent\!\tensN_\vecr$ such that 1) $U\subseteq\mathrm{dom}(\retr)$ and $\retr$ is smooth on $U$; 2) $\retr_\tensX(0) = \tensX$ for all $\tensX\in\tensN_\vecr$; 3) $\mathrm{D}\!\retr_\tensX(\cdot)[0]=\mathrm{id}_{\tangent_\tensX\!\tensN_\vecr}$, the identity mapping on the tangent space. 
\begin{proposition}
    Given $\tensX\in\tensN_\vecr$ and a tangent vector $\tensV\in\tangent_\tensX\!\tensN_\vecr$, the mapping $\retr_\tensX(\tensV)=\proj^{\mathrm{NTTSVD}}_\vecr(\tensX+\tensV)$ defines a retraction.
\end{proposition}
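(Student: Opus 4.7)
The plan is to exploit the factorization $\retr_\tensX(\tensV)=\proj_{\tensB_1}\!\bigl(\proj^{\mathrm{TTSVD}}_\vecr(\tensX+\tensV)\bigr)$ and to leverage the well-known fact that each of the two factors is a retraction on its respective ambient manifold. Specifically, Steinlechner~\cite{steinlechner2016riemannian} shows that $\tensV\mapsto\proj^{\mathrm{TTSVD}}_\vecr(\tensX+\tensV)$ is a retraction on $\tensM_\vecr$ around any point of fixed rank $\vecr$, and the Euclidean normalization $\tensY\mapsto\tensY/\|\tensY\|_\frob$ is obviously a retraction on $\tensB_1$ away from the origin. Since $\tensN_\vecr=\tensM_\vecr\cap\tensB_1$, one then checks that the composition inherits the three defining properties of a retraction.

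First, I would verify $\retr_\tensX(0)=\tensX$: because $\tensX\in\tensM_\vecr$ has exact TT rank $\vecr$, the TT-SVD is the identity on $\tensX$, and since $\|\tensX\|_\frob=1$ the subsequent normalization also leaves it unchanged. Next, for smoothness on a neighborhood of $(\tensX,0)$ in $\tangent\!\tensN_\vecr$, I would argue as follows. The unfolding matrices of $\tensX$ have exact rank $r_k$, so their smallest retained singular values are bounded away from zero; hence in a neighborhood of $\tensX$ the truncated SVDs used by TT-SVD depend smoothly on their arguments, making $\proj^{\mathrm{TTSVD}}_\vecr$ smooth in a neighborhood of $\tensX$ in $\CC^{n_1\times\cdots\times n_d}$. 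Composing with $\tensY\mapsto\tensY/\|\tensY\|_\frob$, which is smooth wherever the norm is nonzero (and $\|\proj^{\mathrm{TTSVD}}_\vecr(\tensX)\|_\frob=1>0$ by continuity in a neighborhood), gives smoothness of $\retr_\tensX$.

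The main computation is the local rigidity condition $\mathrm{D}\retr_\tensX(0)[\tensV]=\tensV$ for every $\tensV\in\tangent_\tensX\!\tensN_\vecr$. Set $f(t)=\proj^{\mathrm{TTSVD}}_\vecr(\tensX+t\tensV)$ and $g(t)=f(t)/\|f(t)\|_\frob$. Since $\tangent_\tensX\!\tensN_\vecr\subseteq\tangent_\tensX\!\tensM_\vecr$ and TT-SVD is a retraction on $\tensM_\vecr$, we have $f(0)=\tensX$ and $f'(0)=\tensV$. Differentiating $g(t)$ at $t=0$ gives
\begin{equation*}
g'(0)=\frac{f'(0)}{\|f(0)\|_\frob}-\frac{\mathrm{Re}\langle f'(0),f(0)\rangle}{\|f(0)\|_\frob^{3}}f(0)=\tensV-\mathrm{Re}\langle\tensV,\tensX\rangle\,\tensX.
\end{equation*}
But $\tensV\in\tangent_\tensX\!\tensB_1$ enforces $\langle\tensV,\tensX\rangle=0$ by~\eqref{eq:Tangent_B1}, and hence $g'(0)=\tensV$, as required.

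The delicate point is smoothness of $\proj^{\mathrm{TTSVD}}_\vecr$: the TT-SVD algorithm is a composition of matricizations, truncated SVDs, and tensorizations, and truncated SVDs are only smooth where the $r_k$-th singular value is strictly larger than the $(r_k{+}1)$-th. I would handle this by restricting to a small enough neighborhood of $\tensX$ in which the singular value gaps of each intermediate unfolding persist by continuity, exactly as in the standard TT-manifold retraction proof. With that ingredient in hand, the three bullet points above hold, and $\retr_\tensX$ is a retraction on $\tensN_\vecr$.
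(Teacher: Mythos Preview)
Your proof is correct. For smoothness and $\retr_\tensX(0)=\tensX$ you follow essentially the same route as the paper (invoking Steinlechner's result that $\proj^{\mathrm{TTSVD}}_\vecr$ is smooth near a rank-$\vecr$ point, composed with normalization away from zero). The difference lies in how you establish local rigidity $\mathrm{D}\retr_\tensX(0)=\mathrm{id}$. You argue by the chain rule: since $\tangent_\tensX\!\tensN_\vecr\subseteq\tangent_\tensX\!\tensM_\vecr$ and TT-SVD is already a retraction on $\tensM_\vecr$, you have $f'(0)=\tensV$; then differentiating the normalization and using $\langle\tensV,\tensX\rangle=0$ kills the correction term. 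The paper instead invokes the quasi-optimality bound~\eqref{eq:quasi} together with the fact that the metric projection $\proj_{\tensN_\vecr}$ is a retraction, obtaining $\|(\tensX+t\tensV)-\retr_\tensX(t\tensV)\|_\frob\leq(2\sqrt{d-1}+1)\|(\tensX+t\tensV)-\proj_{\tensN_\vecr}(\tensX+t\tensV)\|_\frob=\mathcal{O}(t^2)$. Your argument is more self-contained and does not require the quasi-optimality proposition; the paper's argument is shorter once that proposition is in hand and avoids differentiating the TT-SVD composite directly.
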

\begin{proof}
    It suffices to prove the three aforementioned properties. For the first property, it follows from~\cite[Proposition 4]{steinlechner2016riemannian} that there exists a neighborhood $U\subseteq\CC^{n_1\times n_2\times\cdots\times n_d}$ of $\tensX$ and such that $0\notin U$ and $\proj^\mathrm{TTSVD}_\vecr$ are smooth. Since $\proj_{\tensB_1}$ is also smooth on $U$ and $\retr_\tensX(\tensV)=\proj_{\tensB_1}(\proj^\mathrm{TTSVD}_\vecr(\tensX+\tensV))$, the mapping $\retr$ is smooth in a neighborhood of $(\tensX,0)\in\tensN_\vecr\times\tangent_\tensX\!\tensN_\vecr$. The second property is straightforward. 

    We prove the third property through quasi-optimality~\eqref{eq:quasi}. Since the metric projection $\proj_{\tensN_\vecr}$ is a retraction on $\tensN_\vecr$, we have 
    \[\|(\tensX+t\tensV)-\retr_\tensX(t\tensV)\|_\frob\leq(2\sqrt{d-1}+1)\|(\tensX+t\tensV)-\proj_{\tensN_\vecr}(\tensX+t\tensV)\|_\frob=\mathcal{O}(t^2).\]
    Therefore, it holds that $\retr_\tensX(t\tensV)=\tensX+t\tensV+\mathcal{O}(t^2)$, i.e., $\mathrm{D}\!\retr_\tensX(\cdot)[0]=\mathrm{id}_{\tangent_\tensX\!\tensN_\vecr}$. Consequently, $\retr$ defines a retraction on $\tensN_\vecr$ around $\tensX\in\tensN_\vecr$.
\end{proof}

\subsection{Geometric methods}
Based on the geometry of the set $\tensN_\vecr$ of fixed-rank tensors in the NTT format, we consider the following optimization problem on the smooth manifold $\tensN_\vecr$,
\begin{equation}
    \label{eq:general_optim}        \min~f(\tensX),\quad\subjectto~\tensX\in\tensN_\vecr=\tensM_\vecr\cap\tensB_1,
\end{equation}
where $f:\CC^{n_1\times n_2\times\cdots\times n_d}$ is a smooth function.

\begin{algorithm}[htbp]
    \caption{Riemannian conjugate gradient method for~\cref{eq:general_optim} (NTT-RCG)}
    \label{alg:NTT-RCG}
    \begin{algorithmic}[1]
        \REQUIRE Initial guess $\tensX^{(0)}\in\tensN_\vecr$, $t=0$, $\beta^{(0)}=0$
        \WHILE{the stopping criteria are not satisfied}
            \STATE Compute the parameters $\tilde\tensW_k^{(t)}$ of the tangent vector \\
            $\tensV^{(t)}=\proj_{\tangent_{\tensX^{(t)}}\!\tensM}(-\nabla f(\tensX^{(t)}))+\beta^{(t)}\tensT_{\tensX^{(t)}\gets\tensX^{(t-1)}}\tensV^{(t-1)}$ by~\eqref{eq:proj}.
            \STATE Choose stepsize $s^{(t)}$.
            \STATE Update $\tensX^{(t+1)}=\proj^{\mathrm{NTTSVD}}_\vecr(\tensX^{(t)}+s^{(t)}\tensV^{(t)})$ by~\cref{fig:TT-SVD} and $t=t+1$.
        \ENDWHILE
        \ENSURE $\tensX^{(t)}$.
    \end{algorithmic}
\end{algorithm}

We adopt the Riemannian conjugate gradient method to solve~\eqref{eq:general_optim}; see Algorithm~\ref{alg:NTT-RCG}. 
We set the vector transport $\tensT_{\tensX^{(t)}\gets\tensX^{(t-1)}}\tensV^{(t-1)}$ as the orthogonal projection~\eqref{eq:proj}. Therefore, the parameters $\tilde\tensW_k^{(t)}$ of $\tensV^{(t)}$ can be computed by adding the parameters of $\proj_{\tangent_{\tensX^{(t)}}\!\tensM}(-\nabla f(\tensX^{(t)}))$ and $\beta^{(t)}\proj_{\tangent_{\tensX^{(t)}}\!\tensM}(\tensV^{(t-1)})$ obtained by~\eqref{eq:proj}. Given the representation of $\tensV=\proj_{\tangent_\tensX\!\tensN_\vecr}(-\nabla f(\tensX))$ in~\eqref{eq:proj}, the tensor $\tensX+\tensV$ can be represented by a tensor in the TT format where the $(i_1,i_2,\dots,i_d)$-th element is given by
\begin{align*}
    \begin{bmatrix}
        \tilde\matW_1(i_1) & \matu_1(i_1)
    \end{bmatrix}&
    \begin{bmatrix}
        \matY_2(i_2) & 0\\
        \tilde\matW_2(i_2)& \matu_2(i_2)
    \end{bmatrix}
    \begin{bmatrix}
        \matY_3(i_3) & 0\\
        \tilde\matW_3(i_3)& \matu_3(i_3)
    \end{bmatrix}\\
    &\cdots
    \begin{bmatrix}
        \matY_{d-1}(i_{d-1}) & 0\\
        \tilde\matW_{d-1}(i_{d-1})& \matu_{d-1}(i_{d-1})
    \end{bmatrix}
    \begin{bmatrix}
        \matY_d(i_d) \\
        \matu_d(i_d) + \tilde\matW_d(i_d)
    \end{bmatrix},
\end{align*}
where $\tensX=\llbracket\tensU_1,\tensU_2,\dots,\tensU_d\rrbracket$ and $\tensX=\llbracket\tensY_1,\tensY_2,\dots,\tensY_d\rrbracket$ are two equivalent NTT decompositions of $\tensX$ with the left- and right-orthogonal core tensors, respectively. Subsequently, the NTT-SVD algorithm can be efficiently implemented.

\begin{remark}\label{rem:FD}
In practice, if $\nabla f(\tensX)$ is a sparse tensor or can be represented by a low-rank TT tensor, the projected gradient $\proj_{\tangent_\tensX\!\tensN_\vecr}(\nabla f(\tensX))$ can be efficiently computed in a similar fashion as~\cite[\S 4.2]{steinlechner2016riemannian}. For instance, the Euclidean gradient of the objective function $f$ in eigenvalue problems can be represented by a tensor in the TT format, and thus the projected gradient can be efficiently computed; see~\cref{subsec:eig_prob} for details. For applications in quantum information theory, the objective function $f$ can be efficiently computed while the Euclidean gradient $\nabla f$ is a full tensor. Therefore, we adopt a finite-difference approach to approximate the projected gradient: 1) generate orthogonal bases $\tensV_1,\tensV_2,\dots,\tensV_{\dim(\tensN_\vecr)}$ of $\tangent_\tensX\!\tensN_\vecr$; 2) approximate the projected gradient by
\[\proj_{\tangent_\tensX\!\tensN_\vecr}(\nabla f(\tensX))\approx\sum_{k=1}^{\dim(\tensN_\vecr)}\alpha_k\tensV_k\quad\text{with}\quad\alpha_k=\frac{f(\proj^{\mathrm{NTTSVD}}_\vecr(\tensX+t\tensV_k))-f(\tensX)}{t}\]
for sufficiently small $t$.
\end{remark}

\section{Applications in scientific computing}\label{sec:applications_sisc} 
In this section, we apply the NTT decomposition to two applications in scientific computing: recovery of low-rank tensors and high-dimensional eigenvalue problems.

We introduce the default settings. In general, the tensor train-related computations are based on the TTeMPS toolbox\footnote{Available at \url{https://www.epfl.ch/labs/anchp/index-html/software/ttemps/}.}, and the proposed NTT-RCG method is implemented in the Manopt toolbox\footnote{Available at \url{https://www.manopt.org/}} v7.1.0, a Matlab library for geometric methods. All experiments are performed on a workstation with two Intel(R) Xeon(R) Processors Gold 6330 (at 2.00GHz$\times$28, 42M Cache) and 512GB of RAM running Matlab R2019b under Ubuntu 22.04.3. The codes of the proposed methods are available at \url{https://github.com/JimmyPeng1998}.

\subsection{Low-rank tensor recovery}\label{subsec:lrtc}
Given a partially observed tensor $\cA\in\tensN_\vecr$ on an index set $\Omega\in[n_1]\times[n_2]\times\cdots\times[n_d]$, we aim to recover the tensor $\tensA$ from its entries on $\Omega$ by solving the following optimization problem
\begin{equation}
    \begin{aligned}
        \min\quad & f(\tensX)=\frac12\|\proj_{\Omega}(\tensX)-\proj_\Omega(\tensA)\|_\frob^2\\
        \subjectto\quad & \tensX\in\tensN_\vecr,
    \end{aligned}
\end{equation}
where $\proj_\Omega$ is defined by $\proj_\Omega(\tensA)(i_1,i_2,\dots,i_d)=\tensA(i_1,i_2,\dots,i_d)$ if $(i_1,i_2,\dots,i_d)\in\Omega$, otherwise $\proj_\Omega(\tensA)(i_1,i_2,\dots,i_d)=0$. The numerical performance of NTT-RCG is measured by the training error $\|\proj_{\Omega}(\tensX)-\proj_\Omega(\tensA)\|_\frob/\|\proj_\Omega(\tensA)\|_\frob$ and test error $\|\proj_{\Gamma}(\tensX)-\proj_\Gamma(\tensA)\|_\frob/\|\proj_\Gamma(\tensA)\|_\frob$ for another validation set $\Gamma\in[n_1]\times[n_2]\times\cdots\times[n_d]$. 

\paragraph{Test on noiseless data}
We consider the noiseless case, i.e., $\tensA\in\tensN_\vecr$ is exactly a low-rank tensor. We aim to show the ability of the NTT-RCG method in recovering a low-rank tensor under different tensor sizes $n$ and sample sizes $|\Omega|$. Following the settings in~\cite[\S 5.3]{steinlechner2016riemannian}, we set $d=5$, $\vecr=(1,3,3,3,3,1)$, tensor size $n=50,100,\dots,400$ and sample size $2000,4000,\dots,60000$. For each combination of $(n,|\Omega|)$, we run the NTT-RCG method five times. We call a successful recovery by the NTT-RCG method if the test error achieves less than $10^{-4}$ within 250 iterations. Figure~\ref{fig:LRTC_results} (left) reports the phase plot for the NTT-RCG method, where the white block indicates successful recovery in all five runs, the black block indicates failure of recovery in all five runs, and the red line represents $\mathcal{O}(n\log(n))$. The phase plot suggests similar scaling behavior to existing results; see, e.g., \cite[\S 5.3]{steinlechner2016riemannian}.

\paragraph{Test on noisy data}
We consider the noisy case, i.e., $\tensA=\hat{\tensA}+\lambda\tensE/\|\tensE\|_\frob$ consists of a unit-norm low-rank tensor $\hat{\tensA}\in\tensN_\vecr$ and noise tensor $\tensE\in\mathbb{R}^{n_1\times n_2\times\cdots\times n_d}$ with noise level $\lambda$. Each element of $\tensE$ is sampled i.i.d. from the standard Gaussian distribution $N(0,1)$. We set $\lambda=10^{-4},10^{-6},\dots,10^{-12},0$, $d=3$, $n=100$, $\vecr=(1,r_1,r_2,1)=(1,3,3,1)$, and $|\Omega|=10dnr_1^2$. Figure~\ref{fig:LRTC_results} (right) shows the convergence results for the NTT-RCG method. We observe that the NTT-RCG method successfully recovers the underlying low-rank tensor under different noise levels.

\begin{figure}[htbp]
    \centering
    \includegraphics[width=0.465\textwidth]{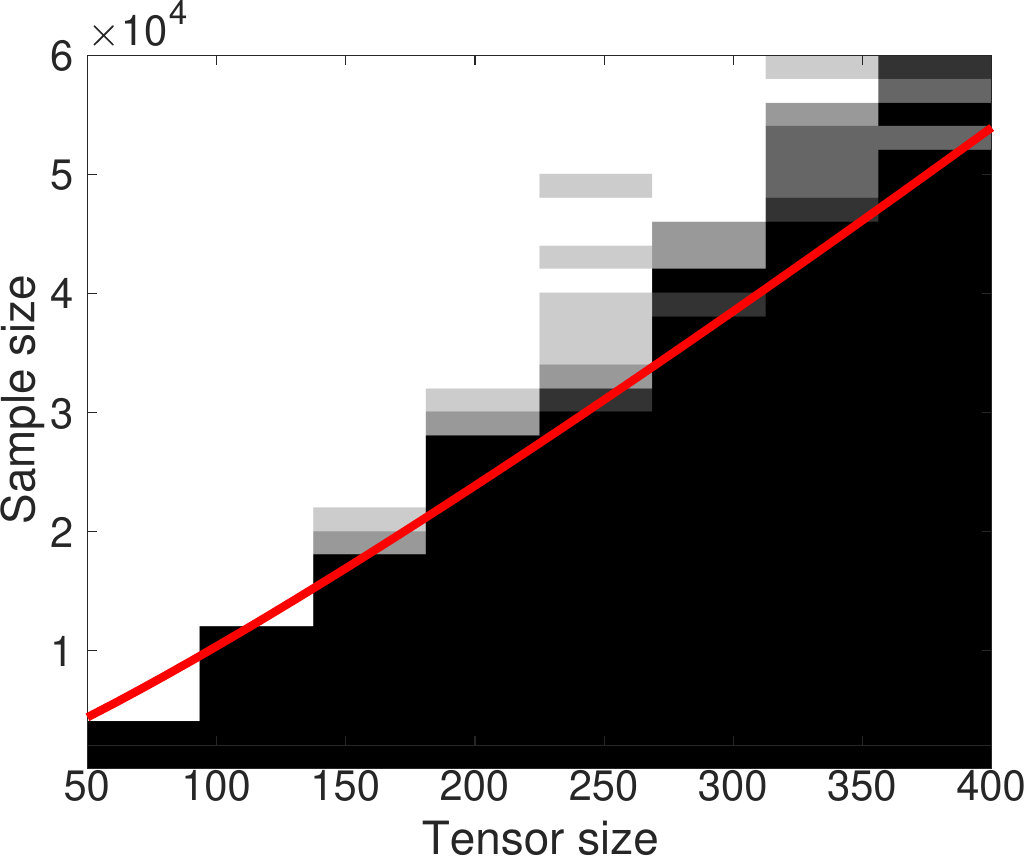}
    \includegraphics[width=0.48\textwidth]{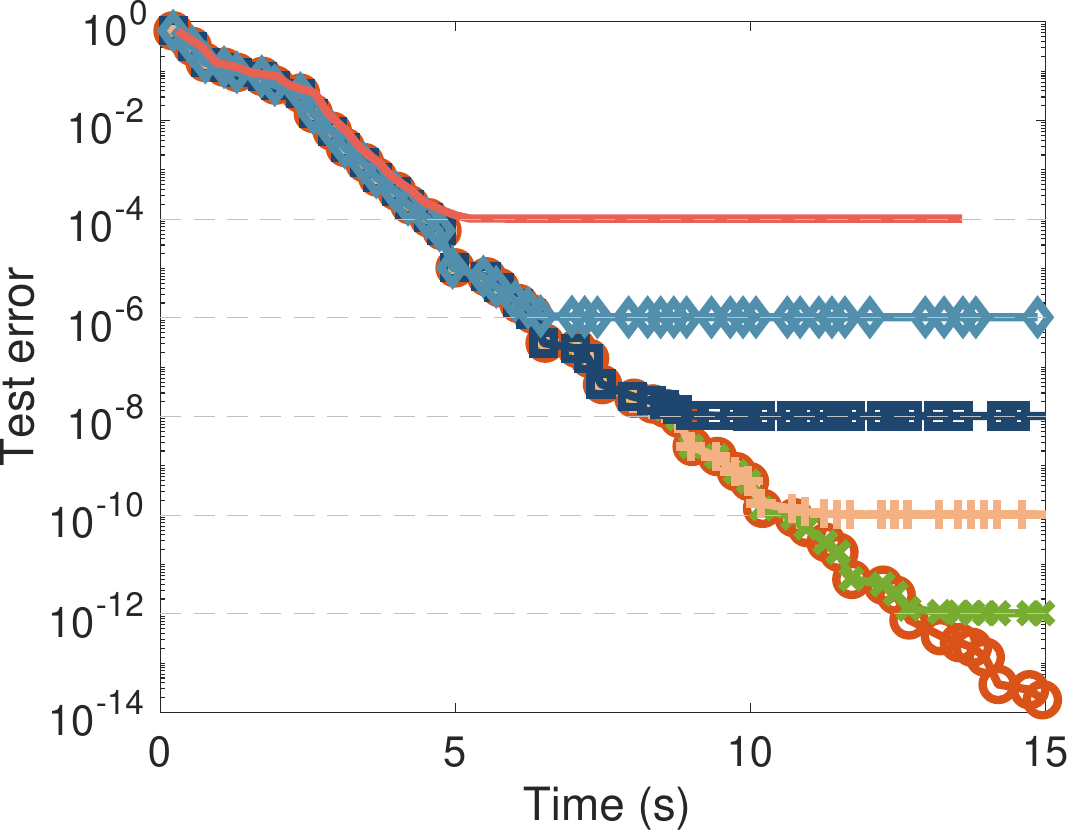}
    \caption{Left: phase plot of recovery results for five runs. The white block indicates successful recovery in all five runs, while the black block indicates failure of recovery in all five runs. Right: test error under noise levels $\lambda=10^{-4},10^{-6},\dots,10^{-12},0$.}
    \label{fig:LRTC_results}
\end{figure}

\subsection{Eigenvalue problem with tensor product structure}\label{subsec:eig_prob}
We consider the following tensor product-structured smallest (largest) eigenvalue problem 
\begin{equation}\label{eq:eigproblem}
    \begin{aligned}
        \min_{\vecx}(\max_{\vecx})\quad & f(\vecx) = \vecx^\top\matH \vecx= \vecx^\top\left(\sum_{\ell=1}^L\matH_{\ell,d}\ox \matH_{\ell,d-1}\ox\cdots\ox\matH_{\ell,1}\right)\vecx\\
        \subjectto\quad & \vecx\in\mathbb{R}^{n_1n_2\cdots n_d},\ \|\vecx\|_2^2 = 1,
    \end{aligned}
\end{equation}
i.e., the matrix $\matH$ can be represented by the sum of Kronecker products of matrices $\matH_{\ell,k}\in\mathbb{R}^{n_k\times n_k}$ for $\ell\in[L]$, $k\in[d]$ and some integer $n_k$. A typical example of such a matrix is a discretization of the $d$-dimensional Laplace operator of the form
\begin{equation}
    \label{eq:Laplace}
    \matH=\matT_{n_d}\ox\matI_{n_{d-1}}\ox\cdots\ox\matI_{n_1}+\cdots+\matI_{n_d}\ox\matI_{n_{d-1}}\ox\cdots\ox\matI_{n_{d-1}}\ox\matT_{n_1},
\end{equation}
where $\matT_n=\mathrm{tridiag}(-1,2,-1)\in\mathbb{R}^{n\times n}$ is a tridiagonal matrix. 

Solving the problem~\eqref{eq:eigproblem} directly is computationally intractable since the number of parameters of $\vecx$ in~\eqref{eq:eigproblem} scales exponentially with respect to $d$. To this end, in light of the tensor product structure, we resort to the normalized tensor train decomposition and restrict~\eqref{eq:eigproblem} to the subset $\tensN_\vecr\subseteq\mathbb{R}^{n_1\times n_2\times\cdots\times n_d}$, i.e.,
\begin{equation}\label{eq:eigproblem_Mr}
    \begin{aligned}
        \min_{\tensX}(\max_{\tensX}) &\qquad  f(\tensX) = \rmvec(\tensX)^\top\matH\rmvec(\tensX)\\
        \subjectto\quad  &\qquad \tensX=\llbracket\tensU_1,\tensU_2,\dots,\tensU_d\rrbracket\in\tensN_\vecr.
    \end{aligned}
\end{equation}
It is worth noting that the computational cost of the objective function $f$ can be significantly reduced by using~\cref{lem:tensor_product}.

\begin{proposition}\label{lem:tensor_product}
    Given $\tensX=\llbracket\tensU_1,\tensU_2,\dots,\tensU_d\rrbracket$ and $\matK_k\in\mathbb{R}^{n_k\times n_k}$, it holds that
    \[(\matK_d\ox\matK_{d-1}\ox\cdots\ox\matK_1)\mathrm{vec}(\tensX)=\mathrm{vec}(\llbracket\tensU_1\times_2\matK_1,\tensU_2\times_2\matK_2,\dots,\tensU_d\times_2\matK_d\rrbracket).\]
\end{proposition}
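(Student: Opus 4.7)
The plan is to prove this in two elementary steps: first rewrite the left-hand side as a sequence of $k$-mode products on $\tensX$, then verify entrywise that each mode-$k$ product of the whole tensor amounts to a mode-$2$ product on the single core $\tensU_k$.

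For the first step, I would invoke the standard Kronecker/mode-product identity (a generalization of the 3-mode relations displayed in~\eqref{eq: matricization}), namely
\[(\matK_d\ox\matK_{d-1}\ox\cdots\ox\matK_1)\rmvec(\tensX)=\rmvec(\tensX\times_1\matK_1\times_2\matK_2\times_3\cdots\times_d\matK_d).\]
This reduces the claim to showing
\[\tensX\times_1\matK_1\times_2\cdots\times_d\matK_d=\llbracket\tensU_1\times_2\matK_1,\tensU_2\times_2\matK_2,\dots,\tensU_d\times_2\matK_d\rrbracket.\]

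For the second step, I would compute a single entry on each side. On the left, by the definition of the mode products,
\[(\tensX\times_1\matK_1\times_2\cdots\times_d\matK_d)(j_1,\dots,j_d)=\sum_{i_1,\dots,i_d}\Big(\prod_{k=1}^{d}(\matK_k)_{j_k,i_k}\Big)\matu_1(i_1)\matu_2(i_2)\cdots\matu_d(i_d),\]
and since the scalar weights $(\matK_k)_{j_k,i_k}$ can be absorbed into the matrix slices independently of one another, this factors as
\[\Big(\sum_{i_1}(\matK_1)_{j_1,i_1}\matu_1(i_1)\Big)\Big(\sum_{i_2}(\matK_2)_{j_2,i_2}\matu_2(i_2)\Big)\cdots\Big(\sum_{i_d}(\matK_d)_{j_d,i_d}\matu_d(i_d)\Big).\]
Each factor is precisely $(\tensU_k\times_2\matK_k)(:,j_k,:)$ by the definition of the $2$-mode product on a third-order core, and multiplying these slices reproduces the $(j_1,\dots,j_d)$ entry of the NTT tensor on the right-hand side.

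There is no real obstacle here; the argument is entirely mechanical. The only point requiring care is bookkeeping of the mode indices, in particular that the physical mode $k$ of $\tensX$ corresponds to the middle index of $\tensU_k$, so that a mode-$k$ product on $\tensX$ translates into a mode-$2$ product on $\tensU_k$, and that the ordering of Kronecker factors in $\matK_d\ox\cdots\ox\matK_1$ is consistent with the ordering used by $\rmvec$. Once these conventions are matched, the two displayed computations yield the identity directly.
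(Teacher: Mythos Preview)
Your proposal is correct but takes a different route from the paper. The paper never forms the full mode product $\tensX\times_1\matK_1\times_2\cdots\times_d\matK_d$; instead it works purely at the level of vectorizations and interface matrices, peeling off one Kronecker factor at a time via the three-index identities~\eqref{eq: matricization} and the recursion~\eqref{eq: recursive interface} for $\matx_{\geq k}$, so that each step converts $(\matK_d\ox\cdots\ox\matK_k)\matx_{\geq k}$ into the corresponding expression with $\tensU_k$ replaced by $\tensU_k\times_2\matK_k$. Your argument is shorter and more transparent: you invoke the $d$-fold Kronecker/vec identity as a known black box and then do a single entrywise factorization, exploiting that the TT entry $\matu_1(i_1)\cdots\matu_d(i_d)$ is multilinear in the slice matrices. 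The trade-off is that the paper's proof is self-contained with respect to the tools set up in the preliminaries (only the three-mode identities are assumed), and its recursive unrolling mirrors how one would actually implement the product without ever materializing the full tensor; your version is cleaner mathematically but relies on a standard identity that the paper has not stated in full generality.
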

\begin{proof}
    We start from the first unfolding matrix $\matx_{\langle1\rangle}$ and yield  
    \begin{align*}
        (\matK_d\ox\cdots\ox\matK_1)\mathrm{vec}(\tensX)
        &=(\matK_d\ox\matK_{d-1}\ox\cdots\ox\matK_1)\mathrm{vec}(\leftunfolding(\tensU_1)\matx_{\geq 2}^\top)\nonumber\\
        &=(\matK_d\ox\matK_{d-1}\ox\cdots\ox\matK_1)(\matx_{\geq 2}\ox\matI_{n_1})\mathrm{vec}(\tensU_1)\nonumber\\
        &=((\matK_d\ox\matK_{d-1}\ox\cdots\ox\matK_2)\matx_{\geq 2}\ox\matK_1)\mathrm{vec}(\tensU_1)\nonumber\\
        &=((\matK_d\ox\matK_{d-1}\ox\cdots\ox\matK_2)\matx_{\geq 2}\ox\matI_{n_1})(\matI_{r_1}\ox\matK_1)\mathrm{vec}(\tensU_1)\\
        &=(((\matK_d\ox\matK_{d-1}\ox\cdots\ox\matK_2)\matx_{\geq 2})\ox\matI_{n_1})\mathrm{vec}(\tensU_1\times_2\matK_1),
    \end{align*}
    where the last equality follows from~\eqref{eq: matricization} and $(\matI_{r_1}\ox\matK_1)\mathrm{vec}(\tensU_1) = (\matI_{r_1}\ox\matK_1\ox\matI_{r_0})\mathrm{vec}(\tensU_1) = \mathrm{vec}(\tensU_1\times_2\matK_1)$. 
    Subsequently, by using~\eqref{eq: recursive interface} and~\eqref{eq: matricization}, we obtain that
    \begin{align*}
        (\matK_d\ox\cdots\ox\matK_2)\matx_{\geq 2}
        &= (\matK_d\ox\matK_{d-1}\ox\cdots\ox\matK_2)(\matx_{\geq 3}\ox\matI_{n_2})\rightunfolding(\tensU_2)^\top\\
        &=(((\matK_d\ox\matK_{d-1}\ox\cdots\ox\matK_3)\matx_{\geq 3})\ox\matI_{n_3})(\matI_{r_2}\ox\matK_2)\rightunfolding(\tensU_2)^\top\\
        &=(((\matK_d\ox\matK_{d-1}\ox\cdots\ox\matK_3)\matx_{\geq 3})\ox\matI_{n_3})\rightunfolding(\tensU_2\times_2\matK_2)^\top.
    \end{align*}
    We observe that the tensor product $(\matK_d\ox\matK_{d-1}\ox\cdots\ox\matK_k)\matx_{\geq k}$ can be recursively computed for $k=3,4,\dots,d$ in a same fashion. Consequently, we yield
    \begin{align*}
        &~~~~(\matK_d\ox\matK_{d-1}\ox\cdots\ox\matK_1)\mathrm{vec}(\tensX)\\
        &=\left((((\rightunfolding(\tensU_d\times_2\matK_d)^\top\ox\matI_{d-1})\rightunfolding(\tensU_{d-1}\times_2\matK_{d-1}))\ox\matI_{d-2})\cdots\right)\rmvec(\tensU_1)\\
        &=\mathrm{vec}(\llbracket\tensU_1\times_2\matK_1,\tensU_2\times_2\matK_2,\dots,\tensU_d\times_2\matK_d\rrbracket).
    \end{align*}
\end{proof}

As a result, the cost function $f$ in~\cref{eq:eigproblem_Mr} can be efficiently evaluated via the core tensors $\tensU_1,\tensU_2,\dots,\tensU_d$ of $\tensX$ by
\[f(\tensX) = \rmvec(\tensX)^\top\matH\rmvec(\tensX)= \sum_{\ell=1}^L\langle\tensX,\llbracket\tensU_1\times_2\matH_{\ell,1},\tensU_2\times_2\matH_{\ell,2},\dots,\tensU_d\times_2\matH_{\ell,d}\rrbracket\rangle.\]
In contrast with straightforwardly computing $\rmvec(\tensX)^\top\matH\rmvec(\tensX)$ with $\mathcal{O}(Ln^{2d})$ flops, the computational cost of the new approach is $\mathcal{O}(Ldn^2r_{\max}^2)$, which scales linearly with~$d$, where $r_{\max}=\max\{r_1,r_2,\dots,r_{d-1}\}$.

We apply the NTT-RCG method to the problem~\eqref{eq:eigproblem_Mr}, and the performance is measured by 1) relative error on the smallest (largest) eigenvalue $|\lambda_{\min}-\lambda|/|\lambda_{\min}|$ ($|\lambda_{\max}-\lambda|/|\lambda_{\max}|$); 2) subspace distance $\mathrm{dist}(\tensX,\vecx^*)=\|\rmvec(\tensX)\rmvec(\tensX)^\top-\vecx^*(\vecx^*)^\top\|_\frob$ if available.

\paragraph{Test on Laplace operator}
We consider the discretization of the $d$-dimensional Laplace operator~\cref{eq:Laplace}. The eigenvalue $\lambda_{i_d,i_{d-1},\dots,i_1}$ and the corresponding eigenvector $\vecv_{i_d,i_{d-1},\dots,i_1}$ enjoys a closed-from expression
\begin{equation}
    \label{eq:eigs_laplace}
    \lambda_{i_d,\dots,i_1}=4\sum_{k=1}^d\sin^2(\frac{i_k\pi}{2(n_k+1)})\quad\text{and}\quad\vecv_{i_d,\dots,i_1}(j_d,\dots,j_1)=\prod_{k=1}^d\sin(\frac{i_kj_k\pi}{n_k+1})
\end{equation}
for $i_k,j_k\in[n_k]$ and $k\in[d]$. It follows from~\cref{lem:tensor_product} that the Euclidean gradient of the objective function $f$ at $\tensX=\llbracket\tensU_1,\tensU_2,\dots,\tensU_d\rrbracket$ can be efficiently computed by
\begin{align*}
    &~~~~\nabla f(\tensX)\\
    &=\llbracket\tensU_1\times_2\matt_{n_1},\tensU_2,\dots,\tensU_d\rrbracket+\llbracket\tensU_1,\tensU_2\times_2\matt_{n_2},\dots,\tensU_d\rrbracket+\cdots+\llbracket\tensU_1,\tensU_2,\dots,\tensU_d\times_d\matt_{n_d}\rrbracket\\
    &=\llbracket\tensG_1,\tensG_2,\dots,\tensG_d\rrbracket,
\end{align*}
where
\[\matG_1(i_1)=\begin{bmatrix}
    \tilde{\matu}_1(i_1) & \matu_1(i_1)
\end{bmatrix},\quad\matG_k(i_k)=\begin{bmatrix}
    \matu_k(i_k) & 0 \\
    \tilde{\matu}_k(i_k) & \matu_k(i_k)
\end{bmatrix},\quad\matG_d(i_d)=\begin{bmatrix}
    \matu_d(i_d)\\
    \tilde{\matu}_d(i_d)
\end{bmatrix}\]
for $k=2,3,\dots,d-1$ and $\tilde{\tensU}_k=\tensU_k\times_2\matT_{n_k}$. Note that the tensor size of $\tensG_k$ is independent of $d$, enabling scalable computations.

We compare the proposed NTT-RCG method with the alternating linear scheme method~\cite{holtz2012alternating}, which is also known as the single-site density matrix renormalization group (DMRG)~\cite{White2005,Ulrich2011}. We observe that any eigenvector $\vecv_{i_d,i_{d-1},\dots,i_1}$ in~\cref{eq:eigs_laplace} can be reshaped into a rank-1 tensor in $\mathbb{R}^{n_d\times n_{d-1}\times\cdots\times n_1}$, and thus the eigenvalue problem~\eqref{eq:eigproblem} is equivalent to~\eqref{eq:eigproblem_Mr} with $\vecr=(1,1,\dots,1)$. We set $n_1=n_2=\cdots=n_d=10$ and $d=8,16,32,\dots,256$. \cref{tab:test_on_Laplace,fig:test_on_Laplace} report the numerical results. We observe that all methods converge to the largest eigenvalue and the corresponding eigenvector for different settings of $d$. Notably, the proposed method performs better than the single-site DMRG with faster convergence and better accuracy on the largest eigenvalue. 

\begin{figure}
    \centering
    \includegraphics[width=\linewidth]{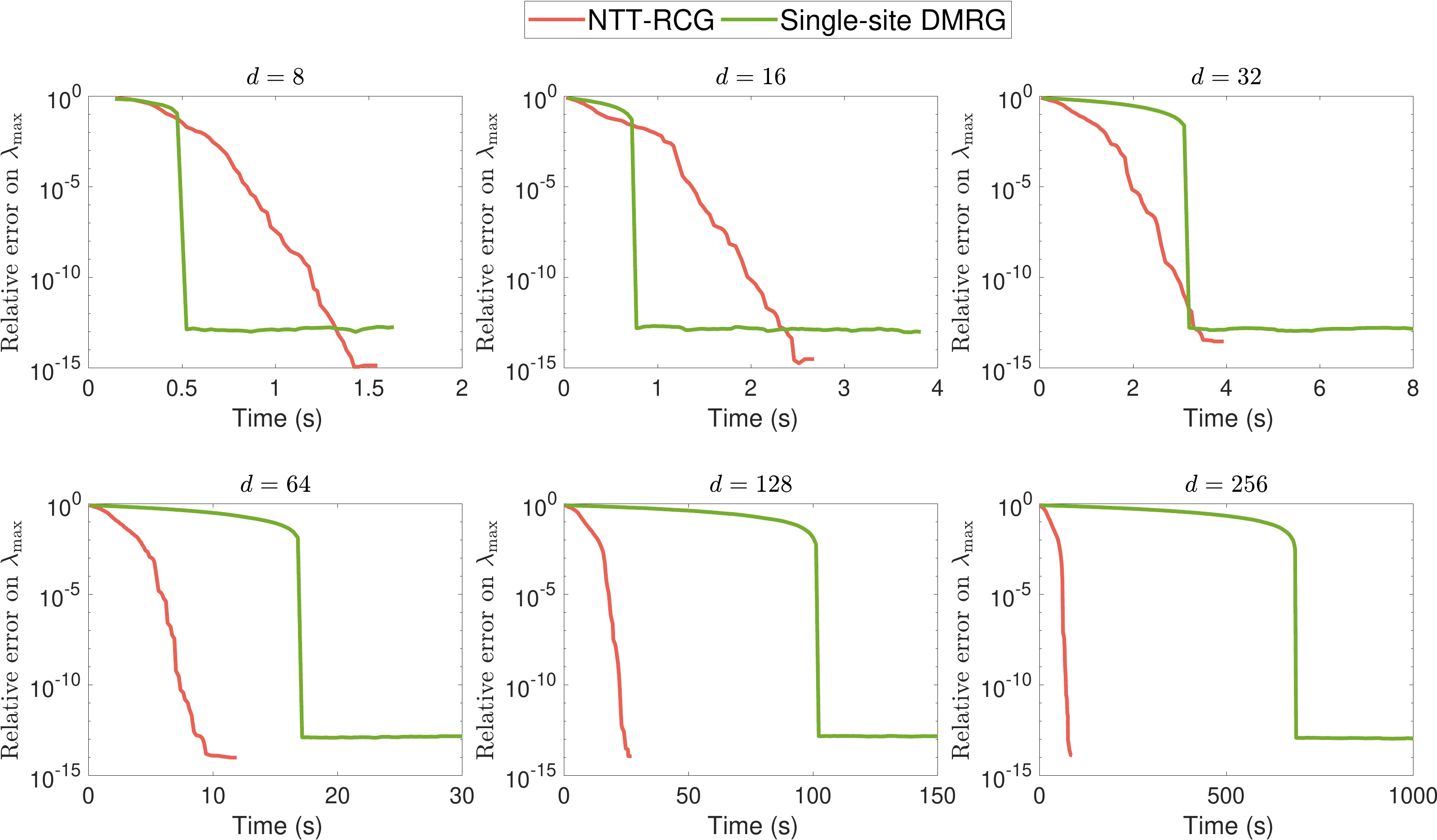}
    \caption{Convergence of two methods for $d=8,16,32,\dots,256$.}
    \label{fig:test_on_Laplace}
\end{figure}

\begin{table}[htbp]
    \centering
    \caption{Numerical results on the discretization of the Laplace operator. Relerr on $\lambda_{\max}$: relative error on the largest eigenvalue.}\footnotesize
    \begin{tabular}{rrrrrrr}
				\toprule
				\multirow{2}*{$d$} & \multicolumn{3}{c}{NTT-RCG} & \multicolumn{3}{c}{Single-site DMRG}\\
				\cmidrule(lr){2-4}\cmidrule(lr){5-7}
				& Time(s) & Relerr on $\lambda_{\max}$ & $\mathrm{dist}(\tensX,\vecx^*)$ & Time(s) & Relerr on $\lambda_{\max}$ & $\mathrm{dist}(\tensX,\vecx^*)$\\
				\midrule
				8 & 1.55 & 1.3598e-15 & 2.1491e-07 & 1.63 & 1.8085e-13 & 5.5952e-06\\
				16 & 2.67 & 3.0596e-15 & 6.3571e-07 & 3.82 & 9.6773e-14 & 5.0423e-06\\
				32 & 3.94 & 2.8896e-14 & 6.1342e-06 & 17.05 & 1.2057e-13 & 9.1063e-06\\
				64 & 11.88 & 9.7453e-15 & 6.1905e-06 & 85.50 & 1.4165e-13 & 1.4724e-05\\
				128 & 27.03 & 1.1558e-14 & 1.2112e-05 & 512.65 & 1.1332e-13 & 1.7337e-05\\
				256 & 86.49 & 1.3258e-14 & 2.4897e-05 & 3438.71 & 1.1751e-13 & 2.6551e-05\\
				\bottomrule
			\end{tabular}
    \label{tab:test_on_Laplace}
\end{table}

\paragraph{Test on transverse field Ising Hamiltonian}
Consider a $d$-site Ising model with a Hamiltonian
\[\matH = -\sum_{k=1}^{d-1}\sigma_k^{z}\sigma_{k+1}^{z}-t\sum_{k=1}^d\sigma_{k}^{x},\]
where $\sigma^x,\sigma^z$ are Pauli matrices defined in~\cref{sec:applications_QIT}, $\sigma_k^{z}=\matI_{2^{k-1}}\ox\sigma^{z}\ox\matI_{2^{d-k}}$, $\sigma^{x}=\matI_{2^{k-1}}\ox\sigma^{x}\ox\matI_{2^{d-k}}$, and $t\in\mathbb{R}$. The eigenvalues of $\matH$ can be efficiently computed via the Jordan--Wigner transformation. However, the eigenvectors are not of closed form. To address this, we consider seeking a low-rank solution of the eigenvector corresponding to the smallest eigenvalue by~\eqref{eq:eigproblem_Mr}. The Euclidean gradient of $f$ at $\tensX\in\tensN_\vecr$ can be represented by the tensor 
$\nabla f(\tensX)=\llbracket\tensG_1,\tensG_2,\dots,\tensG_d\rrbracket$ with
\[\matG_1(i_1)=\left[\begin{smallmatrix}
    \tilde{\matu}_1(i_1) & \breve{\matu}_1(i_1) & \matu_1(i_1)
\end{smallmatrix}\right],\ \matG_k(i_k)=\left[\begin{smallmatrix}
    \matu_k(i_k) & 0 & 0\\
    \breve{\matu}_k(i_k) & 0 & 0\\
    \tilde{\matu}_k(i_k) & \breve{\matu}_k(i_k) & \matu_k(i_k)
\end{smallmatrix}\right],\ \matG_d(i_d)=\left[\begin{smallmatrix}
    \matu_d(i_d)\\
    \breve{\matu}_d(i_d)\\
    \tilde{\matu}_d(i_d)
\end{smallmatrix}\right]\]
for $k=2,3,\dots,d-1$, where $\breve{\tensU}_k = \tensU_k\times_2\matS^{(z)}$ and $\tilde{\tensU}_k=\tensU_k\times_2\matS^{(x)}$ by following~\cref{lem:tensor_product}. 

We set $t=1$, $d=8, 16, 32,\dots,256$. For the sake of better numerical performance, we adopt a rank-increasing strategy to NTT-RCG; see, e.g., \cite[\S4.9]{steinlechner2016riemannian}. Starting from the initial rank $\vecr^{(0)}=(1,1,\dots,1)$, we run NTT-RCG for 50 iterations at each rank, and increase the rank to $\vecr^{(t+1)}=\min\{(1,2,\dots,2^{\lfloor d/2\rfloor},2^{\lfloor d/2\rfloor-1},\dots,1), \vecr^{(t)}+1\}$, until the prescribed maximum rank $\vecr$ is reached. We set the maximum rank $\vecr=\min\{(1,2,\dots,2^{\lfloor d/2\rfloor},2^{\lfloor d/2\rfloor-1},\dots,1), (1,r,r,\dots,1)\}$ with $r=1,4,6,8,\dots,14$. \cref{fig:test_on_Ising,tab:test_on_Ising} report the numerical performance of the NTT-RCG method. First, for small system sizes ($d=8,16$), where the reference eigenvector can be computed by the MATLAB function \texttt{eigs}, the relative error on~$\lambda_{\min}$ and the subspace distance decrease as the parameter~$r$ increases, i.e., a higher-rank solution approximates the eigenvector more accurately. Second, the NTT-RCG method achieves small relative errors on $\lambda_{\min}$ with small rank parameters and acceptable computation time among all choices of $d$. Third, the number of parameters in the NTT representation grows only linearly with~$d$, in sharp contrast to the exponential growth of the full tensor representation. This low-rank structure enables computations on large-scale spin chains (up to $d=256$ sites in our experiments).

\begin{figure}[htbp]
    \centering
    \includegraphics[width=\linewidth]{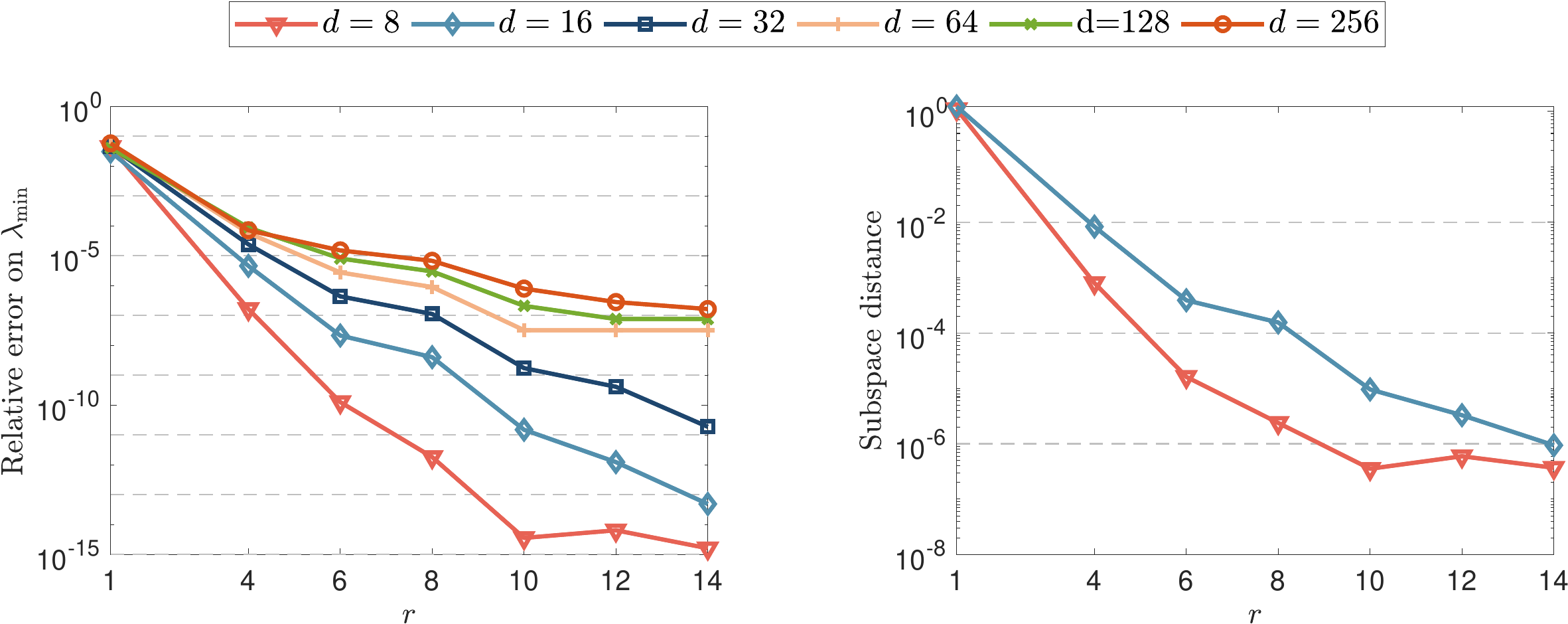}
    \caption{Numerical results on the Ising Hamiltonian. Left: relative error on $\lambda_{\min}$ with $d=8,16,32,\dots,256$. Right: subspace distance with $d=8,16$. }
    \label{fig:test_on_Ising}
\end{figure}

\begin{table}[htbp]
    \centering
    \caption{The performance of the proposed NTT-RCG method on the Ising Hamiltonian. \#params.: number of parameters of $\tensX$. Relerr on $\lambda_{\min}$: relative error on the smallest eigenvalue.}\scriptsize
    \resizebox{1\textwidth}{!}{
    \begin{tabular}{crrcccrrcc}
        \toprule
        $d$ & $r$ & \#params. & Time(s) & Relerr on $\lambda_{\min}$ & $d$ & $r$ & \#params. & Time(s) & Relerr on $\lambda_{\min}$\\
        \cmidrule(lr){1-5}\cmidrule(lr){6-10}
        \multirow{7}*{8} & 1 & 16 & 0.66 & 4.4265e-02 & \multirow{7}*{64} & 1 & 128 & 30.10 & 4.2145e-02\\
         & 4 & 168 & 1.17 & 1.6557e-07 &  & 4 & 1960 & 24.74 & 1.3958e-04\\
         & 6 & 280 & 1.10 & 1.2708e-10 &  & 6 & 4312 & 40.19 & 2.7756e-06\\
         & 8 & 424 & 1.25 & 1.8347e-12 &  & 8 & 7592 & 55.97 & 9.0321e-07\\
         & 10 & 488 & 1.39 & 1.0834e-15 &  & 10 & 11688 & 73.31 & 3.2456e-08\\
         & 12 & 552 & 1.49 & 8.3058e-15 &  & 12 & 16680 & 74.70 & 3.2456e-08\\
         & 14 & 616 & 1.55 & 3.4307e-15 &  & 14 & 22568 & 76.08 & 3.2456e-08\\
        \cmidrule(lr){1-5}\cmidrule(lr){6-10}
        \multirow{7}*{16} & 1 & 32 & 2.24 & 7.3024e-02 & \multirow{7}*{128} & 1 & 256 & 70.28 & 7.8158e-02\\
         & 4 & 424 & 2.86 & 4.6116e-06 &  & 4 & 4008 & 73.67 & 1.6158e-04\\
         & 6 & 856 & 2.63 & 2.1532e-08 &  & 6 & 8920 & 111.12 & 1.1990e-05\\
         & 8 & 1448 & 3.90 & 4.0551e-09 &  & 8 & 15784 & 148.26 & 3.0342e-06\\
         & 10 & 2088 & 4.01 & 1.5077e-11 &  & 10 & 24488 & 184.17 & 2.1188e-07\\
         & 12 & 2856 & 3.47 & 1.2472e-12 &  & 12 & 35112 & 187.36 & 2.1188e-07\\
         & 14 & 3752 & 5.06 & 6.5316e-14 &  & 14 & 47656 & 209.00 & 3.8778e-08\\
        \cmidrule(lr){1-5}\cmidrule(lr){6-10}
        \multirow{7}*{32} & 1 & 64 & 8.92 & 4.5299e-02 & \multirow{7}*{256} & 1 & 512 & 187.28 & 8.2331e-02\\
         & 4 & 936 & 7.65 & 2.4219e-05 &  & 4 & 8104 & 194.31 & 1.8673e-03\\
         & 6 & 2008 & 8.99 & 4.4011e-07 &  & 6 & 18136 & 265.97 & 3.3424e-04\\
         & 8 & 3496 & 9.23 & 1.1340e-07 &  & 8 & 32168 & 331.03 & 5.6170e-05\\
         & 10 & 5288 & 11.32 & 1.7451e-09 &  & 10 & 50088 & 434.01 & 1.2384e-06\\
         & 12 & 7464 & 13.79 & 4.1455e-10 &  & 12 & 71976 & 537.43 & 2.8209e-07\\
         & 14 & 10024 & 13.20 & 1.8888e-11 &  & 14 & 97832 & 594.19 & 1.6480e-07\\
        \bottomrule
    \end{tabular}
    }
    \label{tab:test_on_Ising}
\end{table}


\section{Applications in quantum information theory}\label{sec:applications_QIT} 
In this section, we present applications of the NTT decomposition in computations of stabilizer rank and minimum output entropy of quantum channels, both of which are essential quantities in quantum information theory.

We first introduce notation for quantum information theory. A quantum system $A$ of $n$ qubits is described by the Hilbert space $\cH_A = (\CC^2)^{\ox n}$, with dimension $2^n$. A pure state is a unit-norm vector $\ket{\psi} \in \cH_A$, i.e., $\braket{\psi}{\psi}=1$. A mixed state is described by a density matrix $\rho_A$, which is a positive semidefinite operator on $\cH_A$ with $\tr\rho_A = 1$. A quantum channel $\cN_{A\to B}: \cL(\cH_A) \to \cL(\cH_B)$ is a linear map between spaces of linear operators that is completely positive and trace-preserving. The action of a quantum channel can be expressed via Kraus operators as $\cN(\rho) = \sum_k \matK_k^{} \rho \matK_k^\dagger$ satisfying $\sum_k \matK_k^\dagger \matK_k^{} = \mathbb{I}$. The $n$-qubit Pauli group is defined by $\hat{\cP}_n \coloneqq \{i^{k}\sigma_{h_1}\ox\sigma_{h_2}\ox\cdots\ox\sigma_{h_n} :~k,h_j\in\{ 0,1,2,3\}\}$, where 
\begin{equation*}
\sigma_0 = \left[\begin{array}{cc}
    1 & 0 \\
    0 & 1
\end{array}\right],
~\sigma_1 = \sigma^x = \left[\begin{array}{cc}
    0 & 1 \\
    1 & 0
\end{array}\right],
~\sigma_2 = \sigma^y = \left[\begin{array}{cc}
    0 & -i \\
    i & 0
\end{array}\right],
~\sigma_3 = \sigma^z = \left[\begin{array}{cc}
    1 & 0 \\
    0 & -1
\end{array}\right]
\end{equation*}
are Pauli matrices. The $n$-qubit Pauli group modulo phases is denoted by $\cP_n \coloneqq \hat{\cP}_n/\langle \pm i\idop_{2^n}\rangle$. The $n$-qubit Clifford group is the normalizer of the $n$-qubit Pauli group, defined by $\Cl \coloneqq \big\{U: UPU^\dag \in \hat{\cP}_n, \forall P\in \hat{\cP}_n \big\}.$ The $n$-qubit pure stabilizer states are the orbit of the Clifford group, defined by $\STAB \coloneqq \left\{U\ket{0}^{\ox n}: U\in \Cl\right\}.$ We set the rank parameter 
\begin{equation}
    \label{eq:rank_params}
    \vecr=\min\{(1,2,4,\dots,2^{\lfloor d/2\rfloor},2^{\lfloor d/2\rfloor-1},\dots,2,1), (1,r,r,\dots,r,1)\}
\end{equation}
with an integer $r$ by default. 

\subsection{Approximation of the stabilizer rank}\label{subsec:stab_rank}
The nonstabilizerness, or the magic~\cite{Bravyi_2005}, is a resource essential to unlock the full power of quantum computation and perform universal quantum algorithms that outperform classical counterparts~\cite{Shor_1997,grover1996fast}. This is due to the Gottesman--Knill theorem~\cite{gottesman1997stabilizer}, stating that any \textit{stabilizer states} and \textit{Clifford operations} can be efficiently simulated on classical computers. The \textit{stabilizer rank} provides a quantitative measure of non-stabilizerness~\cite{Bravyi2016,Bravyi2016i}. For an $n$-qubit pure quantum state, it is defined by
\begin{equation}\label{Eq:stabrank}
    \chi(\ket{\psi}) \coloneqq \min\!\big\{ \!R\in\NN_+\!:\!\exists c_1,\dots,c_R\in\CC, \ket{s_1},\dots,\ket{s_R}\in\STAB~\subjectto~\ket{\psi} = \sum_{j=1}^R c_j\ket{s_j} \big\}.
\end{equation}
Determining the stabilizer rank of a quantum state is fundamental to quantifying the magic resources needed for quantum speedups and understanding the boundary between classical and quantum computational power~\cite{Bravyi2016,Bravyi2016i,Bravyi2019simulationofquantum}.

From the definition of the stabilizer rank, it is intractable to directly compute $\chi(\ket{\psi}^{\ox n})$ for tensor powers of a state, because the number of stabilizer states grows as $2^{(1/2+o(1))n^2}$; see~\cite[Proposition 2]{Scott2004}. Therefore, to get an estimation of it, we develop an efficient geometric method to verify whether any given $R$ is a feasible solution for~\eqref{Eq:stabrank}. To this end, we first need an efficient characterization of the stabilizer states. A recently developed magic measure for an $n$-qubit pure quantum state is the $\alpha$-stabilizer R\'enyi entropy (SRE)~\cite{Leone2022}, defined by
\begin{equation*}
    M_{\alpha}(\ket{\psi})\coloneqq\frac{1}{1-\alpha}\log_2 \sum_{P\in\cP_n}\Xi_{P}^{\alpha}(\ket{\psi})-n,
\end{equation*}
where $\Xi_P(\ket{\psi})\coloneqq \bra{\psi} P \ket{\psi}^2 / 2^n$ and $P\in\cP_n$ is an $n$-bit Pauli string. It holds that $M_{\alpha}(\ket{\psi}) = 0$ if and only if $\ket{\psi}\in\STAB$. More crucially, the SREs can be efficiently computed for MPSs. Haug and Piroli~\cite{Haug_2023} developed an approach that can compute the SRE of an MPS with $n$ bond dimensions $r,r,\dots,r$ in terms of the norm of an MPS with $n$ bond dimensions $r^{2\alpha},r^{2\alpha},\dots,r^{2\alpha}$. This motivates us to consider the following optimization problem for a given pure state $\ket{\psi}$, rank parameter~$\vecr$, and number of components $R \in \mathbb{N}_+$:
\begin{equation}\label{eq:stabrank_opt}
\begin{aligned}
    \min_{\{c_j\}_j,\{\ket{\phi_j}\}_j} &\; f(\{c_j\}_j,\{\ket{\phi_j}\}_j)=\frac12\Big\|\sum_{j=1}^{R} c_j \ket{\phi_j} - \ket{\psi}\Big\|_\frob^2 + \lambda\sum_{j=1}^R M_2(\ket{\phi_j}) \\
    \subjectto&\;\; c_1,c_2,\dots,c_R \in \CC,~ \text{each}~ \ket{\phi_j}\in\tensN_{\vecr},
\end{aligned}
\end{equation}
where $\lambda>0$ is a penalty parameter. The objective function consists of a fidelity term $\|\sum_{j=1}^{R} c_j \ket{\phi_j} - \ket{\psi}\|_\frob^2/2$ measuring the reconstruction error and SRE regularization terms $M_2(\ket{\phi_j})$ that promote solutions with low magic. The problem~\eqref{eq:stabrank_opt} can be deemed as an optimization on the product manifold
\[\tensM=\CC^n\times\tensN_{\vecr}\times\tensN_{\vecr}\times\cdots\times\tensN_{\vecr}.\]
We refer to~\cite{gao2025optimization} for optimization on product manifolds.
\begin{remark}
We make several remarks on the proposed method. The first is its effectiveness. If an optimal solution with an objective value of zero is found, we arrive at a valid upper bound on the stabilizer rank of $\ket{\psi}$, i.e., $\chi(\ket{\psi})\leq R$. The second is its efficiency. For the important case of finding the stabilizer rank of a tensor power state $\chi(\ket{\psi}^{\ox n})$, the objective function in~\eqref{eq:stabrank_opt} can be evaluated in time that scales polynomially with $n$, as the target state $\ket{\psi}^{\ox n}$ echoes a rank-1 tensor.
\end{remark}

Finding an exact decomposition with zero objective value provides a strict upper bound on $\chi(|\psi\rangle)$. However, it is numerically challenging. Instead, we propose to seek an $(\epsilon, \delta)$-approximate stabilizer rank, which is defined by the minimum $R$ such that there exists a set $\{c_j, \ket{\phi_j}_{j=1}^R\}$ satisfying two conditions: i) small infidelity, i.e., $1 - |\sum_{j=1}^R c_j \langle \phi_j | \psi \rangle|^2 \le \epsilon$; ii) low magic, i.e., $M_2(\ket{\phi_j}) \le \delta$ for all $j=1, 2,\dots, R$. Finding such a decomposition for a small $R$ provides a physically meaningful approximation of a magic state with states that are \textit{close} to the stabilizer set.

A representative magic state is the qubit $H$ state $\ket{H}=\cos(\pi/8)\ket{0}+\sin(\pi/8)\ket{1}$. Bravyi et al.~\cite{Bravyi2016} showed that the stabilizer rank of $\ket{H^{\ox n}}$ has an upper bound $\chi(\ket{H^{\ox n}})\leq 7^{n/6}$. Therefore, we set $\lambda=1$, $n=2,3,4,5,6$, $R =1,2,\dots,\lceil 7^{n/6}\rceil$ for each $n$, and $\vecr$ with parameter $r=1,2$ and $n\leq 4$ or $\vecr$ with $r=1,2,\dots,5$ in~\cref{eq:rank_params} and $n=5,6$. Since the Euclidean gradient of $M_2(\ket{\psi})$ involves~$4^n$ summands, the Euclidean and projected gradients of the objective function are computationally intractable. Therefore, we adopt the finite difference method (see~\cref{rem:FD}) to approximate the projected gradient by computing directional derivatives along bases of the tangent space of $\tensM$, which involves $\mathcal{O}(2Rnr_{\max}^2)$ evaluations of $f$ with $r_{\max}=\max\{r_1,r_2,\dots,r_{n-1}\}$. 

Numerical results are reported in~\cref{tab:stabrank,fig:stabrank}. We observe that as the number of bases $R$ increases, the infidelity of the approximation decreases. Simultaneously, the maximum SRE among the components remains bounded at a low value. For instance, as shown in~\cref{tab:stabrank}, for $n=4$ qubits, we achieve an infidelity below $1.2\times 10^{-3}$ with $R=3$ components, where each component has an SRE less than $8.3\times 10^{-3}$. It indicates that $R=3$ is a $(1.2\times 10^{-3}, 8.3\times 10^{-3})$-approximate stabilizer rank for $\ket{H^{\ox 4}}$. These results show that the proposed method is able to guide the design of classical simulation algorithms based on the idea of approximately decomposing states into low-magic components.

\begin{figure}[htbp]
    \centering
    \includegraphics[width=\textwidth]{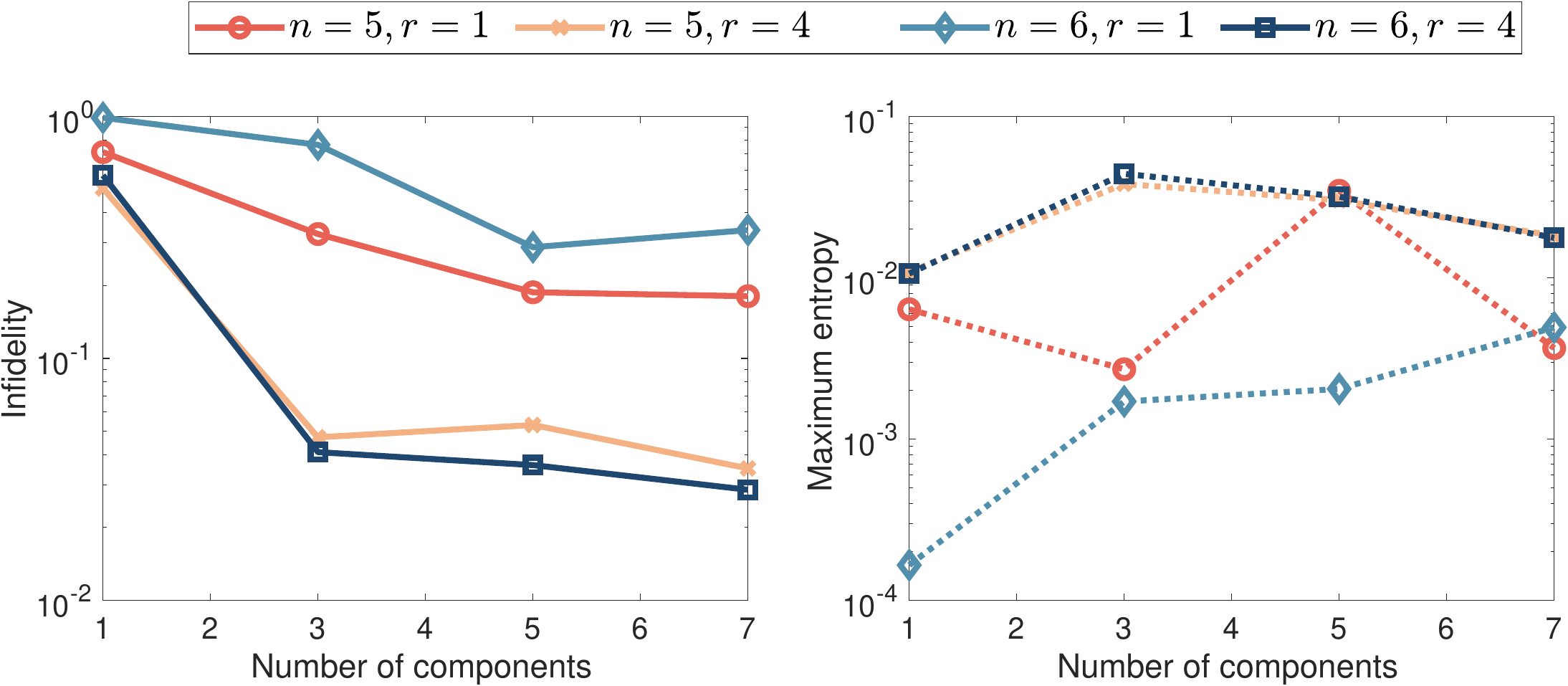}
    \caption{Numerical results on estimating approximate stabilizer rank of $\ket{H^{\ox n}}$ for $n=5,6$ qubits. Left: infidelity. Right: the maximum $2$-stabilizer R\'enyi entropy among each component. }
    \label{fig:stabrank}
\end{figure}

\begin{table}[htbp]
    \centering\scriptsize
    \caption{Numerical results on estimating $(\epsilon,\delta)$-approximate stabilizer rank of $\ket{H^{\ox n}}$ for $n=2,3,4$ qubits, number of components $R=1,2,\dots,\lceil7^{n/6}\rceil$, and parameter $r=1,2$.}
    \label{tab:stabrank}
    \setlength{\tabcolsep}{10pt}       
    \begin{tabular}{ccccl}
        \toprule
        $n$ & $R$ & $r$ & Approximation infidelity & $2$-stabilizer R\'enyi entropy \\
        \midrule
        \multirow{4}*{2} & \multirow{2}*{1} & 1 & 2.4127e-01 & 7.4318e-03\\
         & & 2 & 8.5398e-01 & 5.4383e-03\\
        \cmidrule{2-5}
         & \multirow{2}*{2} & 1 & 1.1871e-01 & 1.5668e-04, 6.4077e-03\\
         & & 2 & 6.9951e-05 & 1.2532e-03, 1.2405e-03\\
        \midrule
        \multirow{6}*{3} & \multirow{2}*{1} & 1 & 4.0758e-01 & 1.1000e-01\\
         & & 2 & 3.3873e-01 & 9.7383e-03\\
        \cmidrule{2-5}
         & \multirow{2}*{2} & 1 & 2.4934e-01 & 4.4777e-03, 9.9008e-04\\
         & & 2 & 2.0903e-01 & 2.4321e-03, 1.2648e-02\\
        \cmidrule{2-5}
         & \multirow{2}*{3} & 1 & 1.1337e-01 & 2.9432e-02, 1.6710e-02, 8.9772e-04\\
         & & 2 & 2.8777e-03 & 5.6882e-03, 4.4175e-03, 3.8043e-03\\
        \midrule
        \multirow{10}*{4} & \multirow{2}*{1} & 1 & 4.3528e-01 & 8.5178e-03\\
         & & 2 & 8.9291e-01 & 4.1529e-03\\
        \cmidrule{2-5}
         & \multirow{2}*{2} & 1 & 4.2484e-01 & 9.0938e-05, 9.7877e-03\\
         & & 2 & 1.0962e-03 & 6.5625e-03, 7.5606e-03\\
        \cmidrule{2-5}
         & \multirow{2}*{3} & 1 & 1.0278e-01 & 2.5400e-04, 3.5386e-03, 7.1955e-03\\
         & & 2 & 1.1394e-03 & 8.2699e-03, 5.7145e-04, 5.5453e-03\\
        \cmidrule{2-5}
         & \multirow{2}*{4} & 1 & 2.6224e-01 & 6.2629e-05, 8.8638e-04, 2.1451e-03, 1.7844e-03\\
         & & 2 & 2.4029e-04 & 1.5270e-03, 1.7929e-06, 1.6754e-03, 1.4706e-03\\
        \bottomrule
    \end{tabular}
\end{table}

\subsection{Minimum output R\'enyi $p$-entropy}\label{subsec:Renyi}
Given a quantum channel $\tensN_{A\to B}$ from $\cL(\cH_A)$ to $\cL(\cH_B)$, the minimum output R\'enyi $p$-entropy of $\tensN_{A\to B}$ is defined by 
\begin{equation}
    \label{eq:p-renyi_entropy}
    S_p^{\min}(\tensN_{A\to B}):=\min_{\rho_A}\frac{1}{1-p}\log\tr(\tensN_{A\to B}(\rho_A)^p),\quad p\in(0,1)\cup (1,+\infty),
\end{equation}
where the minimization is taken over all density matrices $\rho_A$ in $\cL(\cH_A)$. When $p$ goes to $1$, we have $S_1^{\min}(\tensN) = \min_{\rho_A}\!H(\cN(\rho_A))$ where $H(\rho)=-\tr(\rho\log\rho)$ is the von Neumann entropy. It follows from the concavity of the R\'enyi entropies that the minimum~\eqref{eq:p-renyi_entropy} is attained at a pure input state $\rho_A=\ket{\psi}\langle\psi|$. Thus, calculating $S_p^{\min}(\tensN_{A\to B})$ for a given quantum channel can be interpreted as optimization on a unit sphere. One crucial problem in terms of the minimum output R\'enyi $p$-entropy of a given quantum channel $\cN_{A\to B}$ is its \textit{strict subadditivity}, i.e., whether it holds \[S_p^{\min}(\tensN_{A\to B}^{\ox n}) < nS_p^{\min}(\tensN_{A\to B})\] 
for some $n$, given that the subadditivity always holds. A direct numerical method to tackle this problem is to compute $n^{-1}S_p^{\min}(\tensN_{A\to B}^{\ox n})$ for large $n$ and compare it with the one-shot value $S_p^{\min}(\tensN_{A\to B})$. However, the parameter space of the optimization problem for $S_p^{\min}(\tensN_{A\to B}^{\ox n})$ scales exponentially. In view of the tensor product structure of $\cN^{\ox n}_{A\to B}$, we consider the minimization of R\'enyi $p$-entropy on low-rank NTT tensors, i.e.,
\begin{equation}
    \label{eq:low-rank_p-renyi}
    \begin{aligned}
        \min_{\ket{\psi}} & \quad f^{(n)}(\ket{\psi})=\frac{1}{1-p}\log\tr(\cN^{\ox n}_{A\to B}(\ket{\psi}\langle\psi|)^p)\\ 
        \subjectto & \quad\ket{\psi}=\rmvec(\llbracket\tensU_1,\tensU_2,\dots,\tensU_n\rrbracket),\quad\llbracket\tensU_1,\tensU_2,\dots,\tensU_n\rrbracket\in\tensN_\vecr. 
    \end{aligned}
\end{equation}
For $p=2$, we obtain that
\begin{equation*}
    \begin{aligned}
        &~~~~\tr(\tensN_{A\to B}^{\ox n}(\ket{\psi}\langle\psi|)^2)
        &=\sum_{k_1^{(1)},k_2^{(1)},\dots,k_n^{(1)}=1}^K\sum_{k_1^{(2)},k_2^{(2)},\dots,k_n^{(2)}=1}^K \Big|\langle\psi|\bigotimes_{j=1}^n(\matK_{k_j^{(1)}}^\dagger\matK_{k_j^{(2)}}^{})\ket{\psi}\Big|^2,
    \end{aligned}
\end{equation*}
where $\{\matK_{k_j}\}_{k_j}$ is a set of Kraus operators of $\tensN_{A\to B}$. Following the same spirit in~\cite{Haug_2023}, the cost function $f(\ket{\psi})$ can be efficiently computed by the Frobenius norm of a tensor in the TT format with size $2^2\times 2^2\times\cdots\times 2^2$ and bond dimension $(1,r_1^2,r_2^2,\dots,r_{n-1}^2,1)$. We consider two typical channels, the antisymmetric channel and the generalized amplitude damping channel, in numerical experiments.

\paragraph{Antisymmetric channel}
The antisymmetric subspace $\mathrm{asym}_p^d$ is the subspace of $(\CC^d)^{\ox p}$ defined by $\mathrm{asym}_d^p \coloneqq \{ \ket{\psi} \in (\CC^d)^{\ox p} : \ket{\psi} = (-1)^{{\rm sgn}(\sigma)}P_\sigma \ket{\psi},~\text{for all}~\sigma \in S_p \}$, where $S_p$ is the symmetric group, ${\rm sgn}(\sigma)$ is the parity of the permutation $\sigma$, and $P_\sigma$ is the unitary operator that permutes the $p$ subsystems according to the permutation $\sigma$, i.e., $P_\sigma(\ket{\psi_1} \ox \cdots \ox \ket{\psi_p}) \coloneqq \ket{\psi_{\sigma(1)}} \ox \cdots \ox \ket{\psi_{\sigma(p)}}$. When $d=3, p=2$, we have the following basis for the antisymmetric subspace $\ket{\psi_1} = (\ket{01} - \ket{10})/\sqrt{2},~\ket{\psi_2} = (\ket{02} - \ket{20})\sqrt{2},~\ket{\psi_3} = (\ket{12} - \ket{21})\sqrt{2}$. Then consider a channel $\cN_{\mathrm{as}}(\cdot)$ such that its Stinespring isometry is $V:\CC^3 \to \mathrm{asym}_3^2$, which has a matrix form $V=(\ket{\psi_1},\ket{\psi_2},\ket{\psi_3})$. This channel has Kraus operators $\matK_1,\matK_2,\matK_3\in\mathbb{C}^{3\times 3}$ with
\begin{equation*}
    \matK_1 = -\frac{1}{\sqrt{2}}(\ketbra{1}{0} +\ketbra{2}{1}),~\matK_2=\frac{1}{\sqrt{2}}(\ketbra{0}{0}-\ketbra{2}{2}),~\matK_3 = \frac{1}{\sqrt{2}}(\ketbra{0}{1}+\ketbra{1}{2}).
\end{equation*}
We set $n=11,12,\dots,15$ and $\vecr$ with $r=1,2,\dots,10$ in~\cref{eq:rank_params}. The NTT-RCG method is applied to~\cref{eq:low-rank_p-renyi} and terminates if the number of iterations reaches 2000. For each combination of qubit and rank parameter, the NTT-RCG method is tested for five runs.

Numerical results on the antisymmetric channel are reported in~\cref{fig:entropy_anti}. We observe from~\cref{fig:entropy_anti} (left) and~\cref{fig:entropy_anti} (middle) that the average time per iteration scales polynomially with respect to the qubit and rank parameter. We also conclude that the additivity holds for $10,11,\dots,16$ qubits and the NTT tensors with rank parameter no larger than $10$. 

\begin{figure}[htbp]
    \centering
    \includegraphics[width=\linewidth]{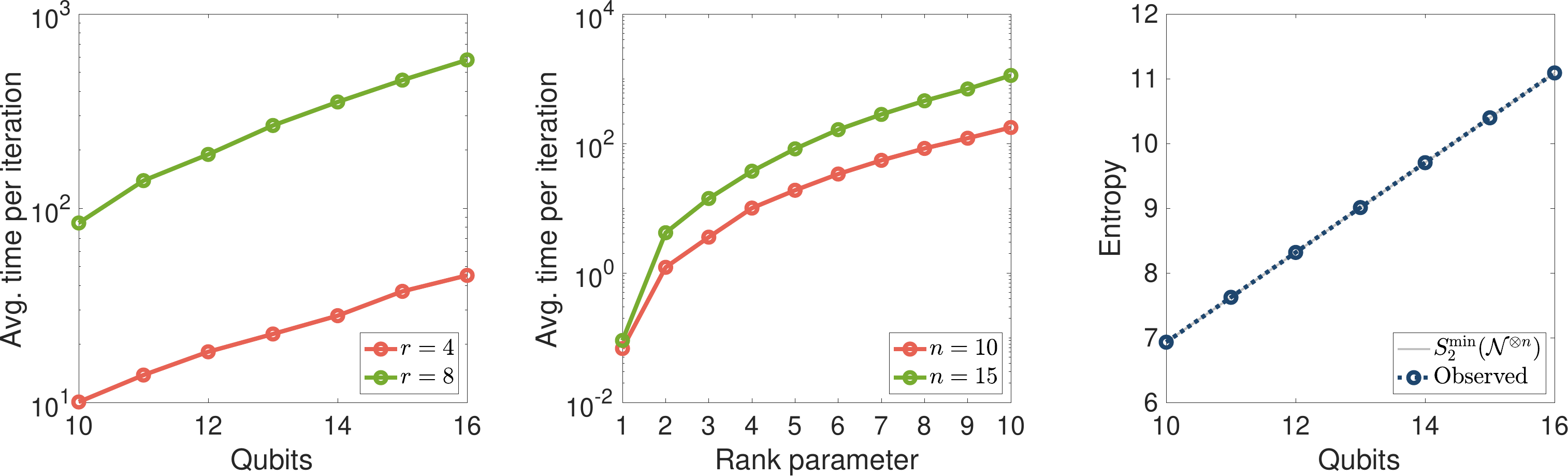}
    \caption{Numerical results on the antisymmetric channel. Left: average time per iteration with respect to qubit. Middle: average time per iteration with respect to the rank parameter. Right: smallest entropy computed from NTT-RCG.}
    \label{fig:entropy_anti}
\end{figure}

\paragraph{Generalized amplitude damping channel (GADC)} The generalized amplitude damping channel is defined by $\cA_{\gamma,N}:\rho\mapsto\sum_{k=1}^4 \mata_k^{}\rho \mata_k^\dag$, where the Kraus operators $\mata_1,\mata_2,\mata_3,\mata_4\in\mathbb{C}^{2\times 2}$ are given by
\begin{align*}
    \mata_1 &= \sqrt{1-N} \big( \ketbra{0}{0} + \sqrt{1-\gamma} \ketbra{1}{1} \big), &
    \mata_2 &= \sqrt{\gamma(1-N)} \ketbra{0}{1}, \\
    \mata_3 &= \sqrt{N} \big( \sqrt{1-\gamma} \ketbra{0}{0} + \ketbra{1}{1} \big), &
    \mata_4 &= \sqrt{\gamma N} \ketbra{1}{0},
\end{align*}
and $\gamma,N\in[0,1]$. We set $n=2,3,\dots,12$ and $\vecr$ with $r=1,2,\dots,10$ in~\cref{eq:rank_params}.
Numerical results on the GADC are reported in~\cref{fig:entropy_gen}. We observe from~\cref{fig:entropy_gen} (left) and~\cref{fig:entropy_gen} (middle) that the average time per iteration scales polynomially with respect to the qubit $n$ and parameter $r$. We also conclude that the additivity holds for $2,3,\dots,12$ qubits and the NTT tensors with rank parameter no larger than~$10$.

\begin{figure}[htbp]
    \centering
    \includegraphics[width=\linewidth]{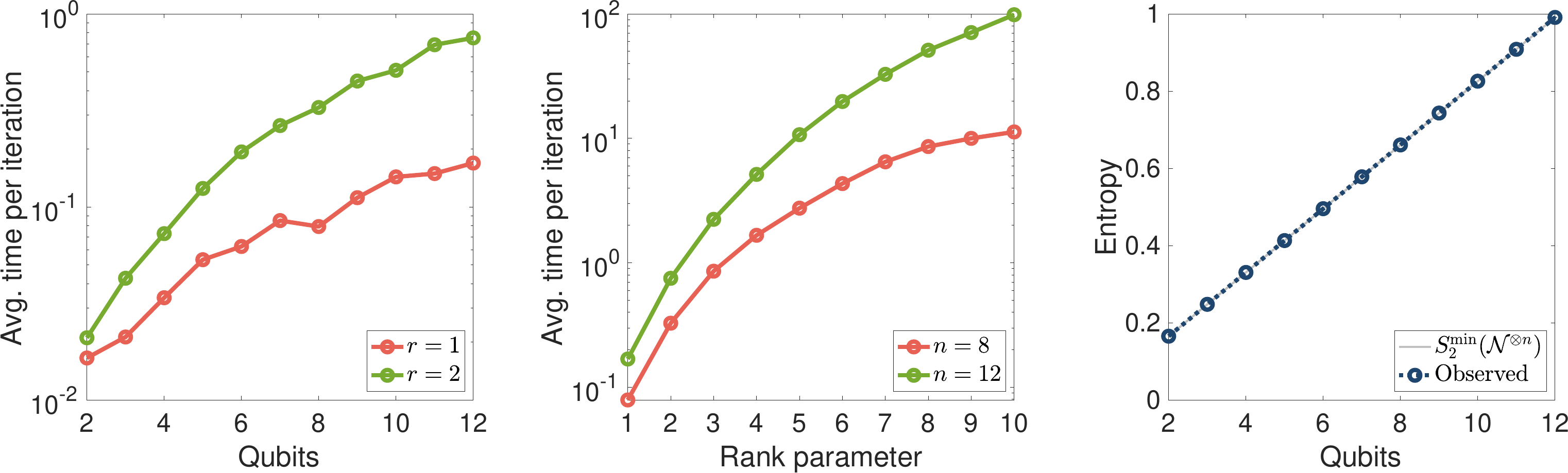}
    \caption{Numerical results on generalized amplitude damping channel. Left: average time per iteration with respect to qubit. Middle: average time per iteration with respect to the rank parameter. Right: smallest entropy computed from NTT-RCG.}
    \label{fig:entropy_gen}
\end{figure}

\section{Conclusion}\label{sec:conclusion}
We have introduced and studied the normalized tensor-train (NTT) format for representing and optimizing over low-rank tensors. We have shown the existence of NTT decomposition for unit-norm tensors and constructed a quasi-optimal approximation operator, which is called the NTT-SVD algorithm. We have proved that the set of fixed-rank NTT tensors forms a smooth manifold and have developed the geometric tools. While NTT and TT decompositions share many similarities in practice, their underlying geometries are fundamentally distinct. NTT naturally encodes tensors on the unit sphere, making it especially suitable for problems where normalization is intrinsic, such as recovery of low-rank tensors with unit norm, eigenvector computations, approximation of the stabilizer rank, computation of the minimum output R\'enyi $p$-entropy, and other applications constrained by unit norms.

We have demonstrated the advantages of the NTT decomposition in several applications. The NTT-based Riemannian conjugate gradient method has better efficiency in eigenvalue and eigenvector computation compared with the single-site DMRG method for large-scale local Hamiltonians. In quantum information theory, geometric methods based on NTT decomposition provide a practical numerical tool to estimate the stabilizer rank of magic states and to search for additivity violations of quantum channel entropy. These problems are intractable with conventional methods due to the exponential scaling of the Hilbert space.

There are some potential directions for future work. In the eigenvalue problem, we have primarily focused on computing a single eigenvector, which has been well-studied in the literature. Extending the NTT framework to efficiently compute multiple eigenvectors or eigenspaces remains an interesting open question. Additionally, we have focused on the unit-norm constraint. It would be worthwhile to explore whether similar frameworks can be developed for tensors subject to other structural constraints, in analogy to recent advances for low-rank matrices~\cite{yang2025space}.



\bibliographystyle{siamplain}
\bibliography{ref}

\begin{thebibliography}{10}

\bibitem{Scott2004}
{\sc S.~Aaronson and D.~Gottesman}, {\em Improved simulation of stabilizer
  circuits}, Phys. Rev. A, 70 (2004), p.~052328.

\bibitem{absil2009optimization}
{\sc P.-A. Absil, R.~Mahony, and R.~Sepulchre}, {\em Optimization algorithms on
  matrix manifolds}, in Optimization Algorithms on Matrix Manifolds, Princeton
  University Press, 2009.

\bibitem{absil2012projection}
{\sc P.-A. Absil and J.~Malick}, {\em Projection-like retractions on matrix
  manifolds}, SIAM Journal on Optimization, 22 (2012), pp.~135--158.

\bibitem{bachmayr2023low}
{\sc M.~Bachmayr}, {\em Low-rank tensor methods for partial differential
  equations}, Acta Numerica, 32 (2023), pp.~1--121.

\bibitem{boumal2023intromanifolds}
{\sc N.~Boumal}, {\em An introduction to optimization on smooth manifolds},
  Cambridge University Press, 2023.

\bibitem{Bravyi2019simulationofquantum}
{\sc S.~Bravyi, D.~Browne, P.~Calpin, E.~Campbell, D.~Gosset, and M.~Howard},
  {\em Simulation of quantum circuits by low-rank stabilizer decompositions},
  {Quantum}, 3 (2019), p.~181.

\bibitem{Bravyi2016i}
{\sc S.~Bravyi and D.~Gosset}, {\em Improved classical simulation of quantum
  circuits dominated by clifford gates}, Phys. Rev. Lett., 116 (2016),
  p.~250501.

\bibitem{Bravyi_2005}
{\sc S.~Bravyi and A.~Kitaev}, {\em Universal quantum computation with ideal
  clifford gates and noisy ancillas}, Physical Review A, 71 (2005).

\bibitem{Bravyi2016}
{\sc S.~Bravyi, G.~Smith, and J.~A. Smolin}, {\em Trading classical and quantum
  computational resources}, Phys. Rev. X, 6 (2016), p.~021043.

\bibitem{cason2013iterative}
{\sc T.~P. Cason, P.-A. Absil, and P.~Van~Dooren}, {\em Iterative methods for
  low rank approximation of graph similarity matrices}, Linear Algebra and its
  Applications, 438 (2013), pp.~1863--1882.

\bibitem{chu2005low}
{\sc M.~Chu, N.~Del~Buono, L.~Lopez, and T.~Politi}, {\em On the low-rank
  approximation of data on the unit sphere}, SIAM Journal on Matrix Analysis
  and Applications, 27 (2005), pp.~46--60.

\bibitem{de2000multilinear}
{\sc L.~De~Lathauwer, B.~De~Moor, and J.~Vandewalle}, {\em A multilinear
  singular value decomposition}, SIAM journal on Matrix Analysis and
  Applications, 21 (2000), pp.~1253--1278.

\bibitem{dolgov2014computation}
{\sc S.~V. Dolgov, B.~N. Khoromskij, I.~V. Oseledets, and D.~V. Savostyanov},
  {\em Computation of extreme eigenvalues in higher dimensions using block
  tensor train format}, Computer Physics Communications, 185 (2014),
  pp.~1207--1216.

\bibitem{gao2024desingularization}
{\sc B.~Gao, R.~Peng, and Y.-x. Yuan}, {\em Desingularization of bounded-rank
  tensor sets}, arXiv preprint arXiv:2411.14093,  (2024).

\bibitem{gao2024riemannian}
{\sc B.~Gao, R.~Peng, and Y.-x. Yuan}, {\em Riemannian preconditioned
  algorithms for tensor completion via tensor ring decomposition},
  Computational Optimization and Applications, 88 (2024), pp.~443--468.

\bibitem{gao2025optimization}
{\sc B.~Gao, R.~Peng, and Y.-x. Yuan}, {\em Optimization on product manifolds
  under a preconditioned metric}, SIAM Journal on Matrix Analysis and
  Applications, 46 (2025), pp.~1816--1845.

\bibitem{gottesman1997stabilizer}
{\sc D.~Gottesman}, {\em Stabilizer codes and quantum error correction},
  California Institute of Technology, 1997.

\bibitem{grasedyck2013literature}
{\sc L.~Grasedyck, D.~Kressner, and C.~Tobler}, {\em A literature survey of
  low-rank tensor approximation techniques}, GAMM-Mitteilungen, 36 (2013),
  pp.~53--78.

\bibitem{grover1996fast}
{\sc L.~K. Grover}, {\em A fast quantum mechanical algorithm for database
  search}, 1996.

\bibitem{haegeman2014geometry}
{\sc J.~Haegeman, M.~Mari{\"e}n, T.~J. Osborne, and F.~Verstraete}, {\em
  Geometry of matrix product states: Metric, parallel transport, and
  curvature}, Journal of Mathematical Physics, 55 (2014).

\bibitem{Hastings_2009}
{\sc M.~B. Hastings}, {\em Superadditivity of communication capacity using
  entangled inputs}, Nature Physics, 5 (2009), p.~255–257.

\bibitem{Haug_2023}
{\sc T.~Haug and L.~Piroli}, {\em Quantifying nonstabilizerness of matrix
  product states}, Physical Review B, 107 (2023).

\bibitem{helmke1995critical}
{\sc U.~Helmke and M.~A. Shayman}, {\em Critical points of matrix least squares
  distance functions}, Linear Algebra and its Applications, 215 (1995),
  pp.~1--19.

\bibitem{hitchcock1928multiple}
{\sc F.~L. Hitchcock}, {\em Multiple invariants and generalized rank of a p-way
  matrix or tensor}, Journal of Mathematics and Physics, 7 (1928), pp.~39--79.

\bibitem{holtz2012alternating}
{\sc S.~Holtz, T.~Rohwedder, and R.~Schneider}, {\em The alternating linear
  scheme for tensor optimization in the tensor train format}, SIAM Journal on
  Scientific Computing, 34 (2012), pp.~A683--A713.

\bibitem{holtz2012manifolds}
{\sc S.~Holtz, T.~Rohwedder, and R.~Schneider}, {\em On manifolds of tensors of
  fixed {TT-rank}}, Numerische Mathematik, 120 (2012), pp.~701--731.

\bibitem{jones2015density}
{\sc R.~O. Jones}, {\em Density functional theory: Its origins, rise to
  prominence, and future}, Reviews of modern physics, 87 (2015), pp.~897--923.

\bibitem{khoo2021efficient}
{\sc Y.~Khoo, J.~Lu, and L.~Ying}, {\em Efficient construction of tensor ring
  representations from sampling}, Multiscale Modeling \& Simulation, 19 (2021),
  pp.~1261--1284.

\bibitem{khrulkov2018desingularization}
{\sc V.~Khrulkov and I.~Oseledets}, {\em Desingularization of bounded-rank
  matrix sets}, SIAM Journal on Matrix Analysis and Applications, 39 (2018),
  pp.~451--471.

\bibitem{kolda2009tensor}
{\sc T.~G. Kolda and B.~W. Bader}, {\em Tensor decompositions and
  applications}, SIAM review, 51 (2009), pp.~455--500.

\bibitem{kressner2014low}
{\sc D.~Kressner, M.~Steinlechner, and B.~Vandereycken}, {\em Low-rank tensor
  completion by {Riemannian} optimization}, BIT Numerical Mathematics, 54
  (2014), pp.~447--468.

\bibitem{krumnow2021computing}
{\sc C.~Krumnow, M.~Pfeffer, and A.~Uschmajew}, {\em Computing eigenspaces with
  low rank constraints}, SIAM Journal on Scientific Computing, 43 (2021),
  pp.~A586--A608.

\bibitem{lee2012smooth}
{\sc J.~M. Lee}, {\em Smooth manifolds}, Springer, 2012.

\bibitem{Leone2022}
{\sc L.~Leone, S.~F.~E. Oliviero, and A.~Hamma}, {\em {Stabilizer {R\'{e}}nyi
  entropy}}, Physical Review Letters, 128 (2022), p.~50402.

\bibitem{oseledets2011tensor}
{\sc I.~V. Oseledets}, {\em Tensor-train decomposition}, SIAM Journal on
  Scientific Computing, 33 (2011), pp.~2295--2317.

\bibitem{Garcia2007}
{\sc D.~Perez-Garcia, F.~Verstraete, M.~M. Wolf, and J.~I. Cirac}, {\em Matrix
  product state representations}, Quantum Info. Comput., 7 (2007),
  p.~401–430.

\bibitem{rakhuba2018jacobi}
{\sc M.~Rakhuba and I.~V. Oseledets}, {\em Jacobi--{Davidson} method on
  low-rank matrix manifolds}, SIAM Journal on Scientific Computing, 40 (2018),
  pp.~A1149--A1170.

\bibitem{rebjock2024optimization}
{\sc Q.~Rebjock and N.~Boumal}, {\em Optimization over bounded-rank matrices
  through a desingularization enables joint global and local guarantees}, arXiv
  preprint arXiv:2406.14211,  (2024).

\bibitem{schneider2015convergence}
{\sc R.~Schneider and A.~Uschmajew}, {\em Convergence results for projected
  line-search methods on varieties of low-rank matrices via {{\L}ojasiewicz}
  inequality}, SIAM Journal on Optimization, 25 (2015), pp.~622--646.

\bibitem{Ulrich2011}
{\sc U.~Schollwöck}, {\em The density-matrix renormalization group in the age
  of matrix product states}, Annals of Physics, 326 (2011), pp.~96--192.
\newblock January 2011 Special Issue.

\bibitem{Shor_1997}
{\sc P.~W. Shor}, {\em Polynomial-time algorithms for prime factorization and
  discrete logarithms on a quantum computer}, {SIAM} Journal on Computing, 26
  (1997), pp.~1484--1509.

\bibitem{steinlechner2016riemannian}
{\sc M.~Steinlechner}, {\em Riemannian optimization for high-dimensional tensor
  completion}, SIAM Journal on Scientific Computing, 38 (2016), pp.~S461--S484.

\bibitem{uschmajew2013geometry}
{\sc A.~Uschmajew and B.~Vandereycken}, {\em The geometry of algorithms using
  hierarchical tensors}, Linear Algebra and its Applications, 439 (2013),
  pp.~133--166.

\bibitem{uschmajew2020geometric}
{\sc A.~Uschmajew and B.~Vandereycken}, {\em Geometric methods on low-rank
  matrix and tensor manifolds}, in Handbook of variational methods for
  nonlinear geometric data, Springer, 2020, pp.~261--313.

\bibitem{vandereycken2013low}
{\sc B.~Vandereycken}, {\em Low-rank matrix completion by {Riemannian}
  optimization}, SIAM Journal on Optimization, 23 (2013), pp.~1214--1236.

\bibitem{vasilescu2003multilinear}
{\sc M.~A.~O. Vasilescu and D.~Terzopoulos}, {\em Multilinear subspace analysis
  of image ensembles}, in 2003 IEEE Computer Society Conference on Computer
  Vision and Pattern Recognition, 2003. Proceedings., vol.~2, IEEE, 2003,
  pp.~II--93.

\bibitem{Verstraete2006}
{\sc F.~Verstraete and J.~I. Cirac}, {\em Matrix product states represent
  ground states faithfully}, Phys. Rev. B, 73 (2006), p.~094423.

\bibitem{vidal2003efficient}
{\sc G.~Vidal}, {\em Efficient classical simulation of slightly entangled
  quantum computations}, Physical review letters, 91 (2003), p.~147902.

\bibitem{White2005}
{\sc S.~R. White}, {\em Density matrix renormalization group algorithms with a
  single center site}, Phys. Rev. B, 72 (2005), p.~180403.

\bibitem{yang2025space}
{\sc Y.~Yang, B.~Gao, and Y.-x. Yuan}, {\em A space-decoupling framework for
  optimization on bounded-rank matrices with orthogonally invariant
  constraints}, arXiv preprint arXiv:2501.13830,  (2025).

\end{thebibliography}
\end{document}